\documentclass[11pt]{article}

\usepackage{graphicx}
\usepackage{amsmath}
\usepackage{amsfonts}
\usepackage{color}
\usepackage{latexsym}
\usepackage{tikz}
\usepackage{pgfplots}
\pgfplotsset{compat=1.18} 
\usepackage{tablefootnote}
\usepackage{setspace}
\usepackage{epsfig}
\usepackage{multirow}
\usepackage{float}
\usepackage{array}
\usepackage{xcolor}
\usepackage{amssymb,amsthm}
\usepackage{xurl}
\usepackage{hyperref}    
\usepackage{cleveref}    

\definecolor{antiquefuchsia}{rgb}{0.57, 0.36, 0.51}
\definecolor{MyViolet}{rgb}{0.45,0.08,0.95}
\definecolor{MyBrown}{rgb}{0.45,0.08,0}
\definecolor{MyDarkBlue}{rgb}{0,0.08,0.45}

\usepackage{makecell,booktabs}
\usepackage[version=4]{mhchem}   
\usepackage[strict]{changepage}

\usepackage{amsmath}
\usepackage{amsfonts}
\usepackage{latexsym}
\usepackage{amssymb}

\newtheorem{thm}{Theorem}[section]
\newtheorem{theorem}[thm]{Theorem}

\newtheorem{lemma}[thm]{Lemma}

\newtheorem{corollary}[thm]{Corollary}

\newtheorem{remark}[thm]{Remark}

\newcommand{\beq}{\begin{equation}}
\newcommand{\eeq}{\end{equation}}
\newcommand{\beqa}{\begin{eqnarray}}
\newcommand{\eeqa}{\end{eqnarray}}
\newcommand{\beqas}{\begin{eqnarray*}}
\newcommand{\eeqas}{\end{eqnarray*}}
\newcommand{\bi}{\begin{itemize}}
\newcommand{\ei}{\end{itemize}}

\usepackage{algorithm}
\usepackage{algpseudocode}

\begin{document}

\title{The method of ellipcenters with momentum and relaxation for convex quadratic minimization}

\date{}
 
	 \maketitle

\begin{center}
\begin{tabular}{ccc}
\begin{tabular}{c}
Roger Behling\\
Department of Mathematics, UFSC\\
Blumenau, Santa Catarina, Brazil\\
{\tt rogerbehling@gmail.com}
\end{tabular}&
&
\begin{tabular}{c}
Vincent Guigues\\
School of Applied Mathematics, FGV\\
Praia de Botafogo, Rio de Janeiro, Brazil\\
{\tt vincent.guigues@fgv.br}
\end{tabular}
\end{tabular}
\end{center}

\begin{center}
\begin{tabular}{c}
Harry Oviedo\\
Department of Mathematics\\
Instituto Tecnológico Autónomo de México, ITAM
\\
Mexico city, Mexico\\
{\tt harry.oviedo@uai.cl}
\end{tabular}
\end{center}

\par {\textbf{Abstract.}} The method of ellipcenters (ME) is a recent technique developed for unconstrained minimization. Its iteration relies only on first order information and consists of building a suitable two-dimensional ellipse that tries to capture intrinsic ill-conditioning of the problem. The center of the ellipse is taken as the next iterate, which justifies the name of the method. ME was already shown to converge linearly when the objective function is smooth and strongly convex. The special case when the objective is  quadratic was studied in the first paper on ME and is also the subject of our work here. 
In this paper, we propose two variants of ME:
RelaxME which adds in ME an additional relaxation step
at each iteration and MomME which embeds momentum
in ME. We prove linear convergence of RelaxME and MomME and,
in particular, we show convergence of RelaxME and
MomME (and also of ME) in one iteration when
the matrix of the quadratic form has only two distinct
eigenvalues. Finally, we provide the results
of numerical experiments which compare on five
optimization problems (and different combinations of parameters defining these problems) ME, RelaxME, and MomME,
with 3 other optimizers: the conjugate gradient method \cite{hestenes1952methods}, Barzilai and Borwein gradient method with long step \cite{barzilai1988two}, and the gradient method with adaptive spectral step length \cite{Frassoldati2008}.
On most instances, MomME provides the smallest number of
iterations and conjugate gradient and MomME provide
the smallest CPU times and similar CPU times. This opens optimistic possibilities for ME with momentum in broader settings.\\

\par {\textbf{Keywords.}} Quadratic optimization, ellipcenter, relaxation, optimization method with momentum, finite convergence, image smoothing.



\maketitle

\section{Introduction}

We present two procedures (relaxation and momentum) in order to accelerate the Method of Ellipcenters (ME), a recent first order technique that employs centers of suitable two-dimensional ellipses for minimizing a strongly convex and differentiable function $f: \mathbb{R}^n \rightarrow \mathbb{R}$. ME iterates by moving from a current iterate $x_k\in\mathbb{R}^n$ to a point $x_{k+1}\in\mathbb{R}^n$ that is the center of a special ellipse $E_k$.

Let us start by describing ME formally as it was introduced in \cite{behling2026method}. Consider that $x_k\in \mathbb{R}^n$ is the current iterate of ME. If it is stationary, that is, $\nabla f(x_k)=0$, we are already at the unique minimizer of $f$ and ME stops with success. When $x_k$ is non-stationary, ME computes the unique point $y_k\neq x_k$ such that $y_k=x_k-t_k\nabla f(x_k)$ and $f(y_k)=f(x_k)$. Then, if $\mbox{span}\{\nabla f(x_k),\nabla f(y_k)\}$ is one-dimensional, ME returns $(x_k+y_k)/2$. If $\nabla f(x_k)$ and $\nabla f(y_k)$ are linearly independent, ME takes the next iterate as the center of an ellipse $E_k$ fulfilling the following three properties:
\begin{description}
   \item[ME1] $E_k$ is an ellipse contained in \begin{equation}\label{defpik}
\Pi_k:=\Big\{x\in \mathbb{R}^n :x=x_k+\mbox{span}\{\nabla f(x_k),\nabla f(y_k)\}\Big\};
\end{equation}
   \item[ME2] $E_k$ is orthogonal to $\nabla f(x_k)$ at $x_k$;
   \item[ME3] $E_k$ is orthogonal to $\nabla f(y_k)$ at $y_k$.
\end{description}
The intention of ME is to capture ill-conditioning through two-dimensional elliptical interpolations. Its geometry is illustraded in Figure \ref{Grafico}.

\begin{figure}[h]
    \centering
    \includegraphics[width=0.7\textwidth]{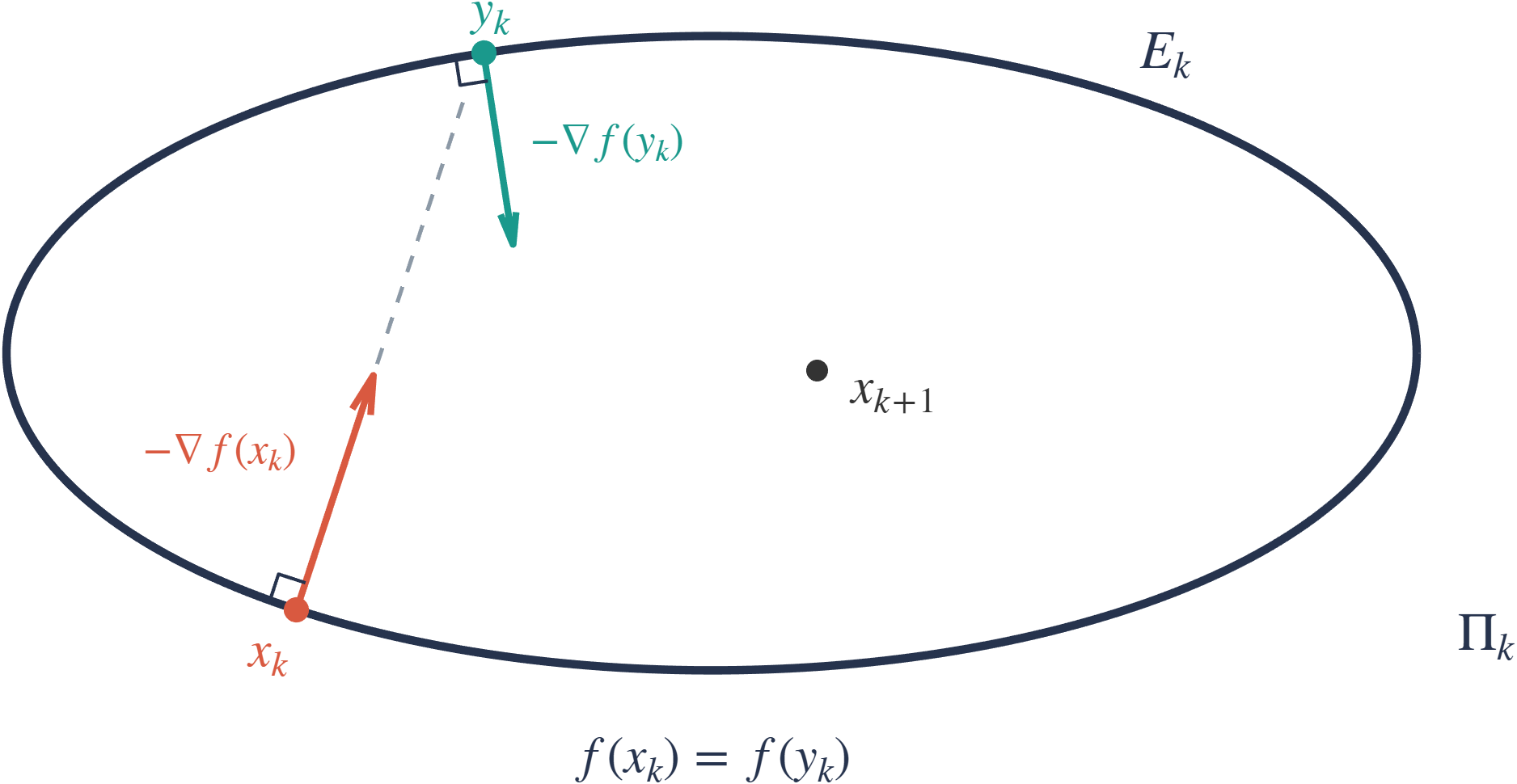}
    \caption{Illustration of an iteration of ME.}
    \label{Grafico}
\end{figure}

The construction of an ellipse, \textit{i.e.}, a two-dimensional ellipsoid is not in vain. There are several reasons for that. One is that the zigzagging of the Cauchy method (steepest descent with optimal line-search) for a strictly convex quadratic objective gets, in the limit, smashed into a two-dimensional affine space related to the two extreme eigenvalues of the Hessian (see 
\cite{akaike1959}, \cite{nocedal02}, and \cite{GonzagaKaras2013}). Moreover, level sets of strictly convex quadratics are ellipsoids which, in turn, provide good approximations of level sets of a wide range of smooth convex functions. So ME is somewhat designed to encage the minimizer by ellipsoids constrained to the two-dimensional affine spaces $\Pi_k$ described above. For a strictly convex quadratic in $\mathbb{R}^2$, ME fences in the minimizer perfectly and yields finite convergence in one single step. In particular, it coincides with Newton's method in this simple context.

Very recently, ME has been studied for strongly convex minimization. In \cite{behling2026ME_Strong},  the set of ellipses and associated centers has been characterized using standard Analytic Geometry. It turns out that there is an infinite number of ellipses satisfying ME1, ME2, and ME3. Furthermore, the correspondent centers lie in an appropriate half-line starting at $\frac{1}{2}(x_k+y_k)$ and going in a certain direction $d_k$, that depends 
on $x_k, y_k$ and their gradients with respect to $f$ (see the formula in \cite{behling2026ME_Strong}). The starting point $\frac{1}{2}(x_k+y_k)$ is somewhat degenerate, as the associated "ellipse" becomes simply the segment with end points $x_k$ and $y_k$. One can then, among all ellipcenters, look for the point $x_{k+1}$ that minimizes $f$ along this half-line $S_k:=\{x\in\mathbb{R}^n:x=\frac{1}{2} (x_k+y_k)+td_k, \mbox{ }t\geq 0\}$ of centers of ellipses. This particular center of ellipse given by the minimizer of $f$ in $S_k$ will be referred to as the best ellipcenter with respect to $x_k$ and denoted by $\mbox{ME}(x_k)$. It was shown in \cite{behling2026ME_Strong} that, by defining ME in this way, the method achieves linear convergence to the unique global minimizer of the smooth strongly convex objective $f$. 

As said earlier, this work will focus on $f$ being not only strongly convex and differentiable, but also quadratic. That said, it will not be necessary for us to go further into details on the construction of the half-line of ellipcenters $S_k$. We can avoid this discussion, because the best ellipcenter $\mbox{ME}(x_k)$ happens to be the minimizar of $f$ in the plane $\Pi_k$ when $f$ is quadratic, see \cite{behling2026method}, and thus $\mbox{ME}(x_k)$ is essentially provided by the outcome of a simple $2\times 2$ linear system of equations. We observe that this characterization of $\mbox{ME}(x_k)$ being the minimizer of $f$ in $\Pi_k$ is no longer true for general smooth strongly convex minimization, see the discussion in \cite{behling2026ME_Strong} and how it implies that the method in \cite{YunierME2026} is different from ME.

For the rest of the paper, we consider
optimization methods to solve the problem
\begin{equation}\label{formfqd}
\min_{x \in \mathbb{R}^n} f(x) := \frac{1}{2} x^T A x - b^T x+c
\end{equation}
where is $f$ a strongly convex quadratic function, i.e.,
\( A \in \mathbb{R}^{n \times n} \) is a symmetric positive definite  matrix, \( b \in \mathbb{R}^n \), and $c$ is a scalar.
We will denote by $x_*$ the optimal solution
of \eqref{formfqd}.

We now list the three main contributions of our paper. The first one regards the improvement on the rate of convergence of ME itself. In \cite{behling2026method}, it was shown that ME, for minimizing the quadratic $f$, has a convergence rate better or equal to $\eta$, where $\eta$ is the linear rate for the Cauchy method. We show that, actually, the rate of ME is at least as good as the rate provided by two successive Cauchy steps, that is, ME converges with rate $\eta^2$. Our second contribution consists of considering in ME the general relaxation framework by \cite{RaydanSvaiter}. We call RelaxME the corresponding variant of ME.
We prove that RelaxME also enjoys linear convergence and shares the same theoretical linear rate $\eta^2$ of ME for the case when the relaxation parameter is $1$. In our experiments, however, RelaxME consistently outperforms ME. Finally, we employ Nesterov's momentum \cite{nesterov1983} within ME, getting what we call MomME. MomME still maintains the rate $\eta^2$ in theory, but behaves way faster than both ME and RelaxME, in practice. 
Moreover, we prove
finite convergence of ME, RelaxME, and
MomME when $A$
has at most two different eigenvalues.

Before moving on with the presentation of the organization of the paper, let us briefly point out some facts that may already help with the understanding of the sections ahead. We note that $f$ being a strictly convex quadratic gives us some nice properties. Indeed, it turns out that the Cauchy point computed at the non-stationary iterate $x_k$, denoted here by $\mbox{Cauchy}(x_k)$ is simply given by $\frac{1}{2}(x_k+y_k)$, with $y_k$ being the auxiliary point defined in ME. More than that, $\nabla f(\mbox{Cauchy}(x_k))=A(\frac{1}{2}(x_k+y_k))-b=\frac{1}{2}\nabla f(x_k)+\frac{1}{2}\nabla f(y_k)$. This means, in particular, that $\nabla f(\mbox{Cauchy}(x_k))\in \mbox{span}\{\nabla f(x_k),\nabla f(y_k)\}$. Also, one can prove that if $\nabla f(x_k)$ and $\nabla f(y_k)$ are linearly dependent, $\nabla f(\mbox{Cauchy}(x_k))=0$ and ME (and also RelaxME and MomME) is done with the minimization of $f$, see Lemma \ref{remterminates} below. Finally, when $\mbox{span}\{\nabla f(x_k),\nabla f(y_k)\}$ is two-dimensional, it follows trivially from our observations that $\mbox{Cauchy}(\mbox{Cauchy}(x_k))\in \Pi_k$. However, recall that $x_{k+1}:=\mbox{ME}(x_k)$ is the minimizer of $f$ in $\Pi_k$ now that $f$ is quadratic. Long story short, we are saying that one iteration of ME is better or equal, by means of function value, than two consecutive steps of the Cauchy method. This is the main reason for us to get the rate $\eta^2$ mentioned above.

\if{
The idea here in principle is to pick a better point than the one given by the classical gradient method, also known as the steepest descent method or Cauchy method \cite{Cauchy1847}. More than that, the intention of $E_k$ is to emulate the level curve of $f$ restricted to $\Pi_k$ and encage a minimizer, which is slightly related to the recent
Circumcentered-Reflection-Method (CRM) proposed and studied in
for instance  \cite{Behling2024b,crm1}. Figure \ref{Grafico} illustrates an iteration of ME.

It is easy to see that even if $f$ has a global minimizer, the ME iteration might not be well-defined, there could simply not exist a point $y^k$ as required in the procedure. If, however, $f$ is strongly convex or coercive  an ellipse as described above can be constructed, and right away from ME1, ME2, and ME3 one would get that $x^{k+1}-x^k$ is a descent direction for $f$ at $x^k$. Nevertheless, there could be more ellipses $E_k$ fulfilling ME1-ME3 and, among them, a specific one would have to be chosen. Thus, in order to facilitate the understanding of ME and to clearly present its philosophy, we will assume along the remaining of the paper that $f$ is a strongly convex quadratic function. More general cases will be treated in future work. The strongly convex quadratic case alone is interesting and elegant. More than being able to build a convenient ellipse enjoying the properties ME1, ME2 and ME3, we get closed formulas for $y^k$ and can easily compute $x^{k+1}$ analytically when $f$ is strongly  convex and quadratic.

Accelerating the gradient method with first-order tools has long been a popular subject of research in the field of Continuous Optimization, see for instance \cite{grimmer2023optimal,lan2015bundle,mishchenko2020adaptive,nesterov2015universal,renegar2022simple} and \cite{zhou2024adabb}. Perhaps one of the most famous of these methods is given in the paper \cite{nesterov1983} by Y. Nesterov (fine tuning
of this algorithm is discussed in \cite{gonzagakaras}). He embedded inertia in the gradient method and was able to derive an iteration with best possible complexity for a first-order method, \textit{i.e.}, a method using first derivatives only.
Another popular and efficient accelerated gradient method for composite
convex minimization is FISTA which was introduced in \cite{fista}.
Another first-order algorithm that enjoys optimal complexity in theory is the Ellipsoid method \cite{Khachiyan1979}. Nevertheless, in practice, the Ellipsoid method, which is different from what we are proposing here, is not attractive.

}\fi

Our paper is organized as follows.
In Section \ref{sec:relaxme}, we describe
RelaxME, show several properties of this
method, and show its convergence.
In Section \ref{sec:3}, we describe
the variant MomME of ME and prove its
convergence. In Section \ref{sec:finiteconv},
we prove the convergence of ME, RelaxME,
and MomME in one iteration when
$A$ has at most two different eigenvalues.
Finally, in Section \ref{sec:num}, we provide the results
of numerical experiments which compare on five
optimization problems (and different combinations of parameters defining these problems) ME, RelaxME, MomME,
with 3 other optimizers: the conjugate gradient method \cite{hestenes1952methods}, Barzilai and Borwein gradient method with long step \cite{barzilai1988two}, and the gradient method with adaptive spectral step length \cite{Frassoldati2008}.
In most instances, MomME provides the smallest number of
iterations and conjugate gradient and MomME provide
the smallest CPU times and similar CPU times. This opens optimistic possibilities for ME with momentum in broader settings.

\section*{Notation and terminology}

For $x \in \mathbb{R}^n$, and $A$ an $n \times n$ definite positive
matrix, we denote by $\|\cdot\|_A$ the norm defined by
$$
\|x\|_A=\sqrt{x^T A x}
$$
induced by the scalar product $
\langle x,y\rangle_A=x^T A y
$
for $x,y \in \mathbb{R}^n$.
Given
an $n \times n$ definite positive
matrix $A$, we say that vectors
$x,y \in \mathbb{R}^n$ are 
$A$-conjugate if 
$\langle x,y \rangle_A=0$.

For vectors $x,y \in \mathbb{R}^n$, we will use the notation
$\langle x,y \rangle$ for the scalar product $x^T y$ with corresponding norm
$\|\cdot\|$.

\section{RelaxME}\label{sec:relaxme}

\subsection{Motivation and algorithm}

In this subsection, we introduce and analyze the convergence of the relaxed
variant RelaxME of ME.
More precisely, at
a given iteration
where $x_k$ was the last
point computed by
RelaxME (the point computed in the previous iteration), we first compute $\tilde x_{k+1}$
performing one iteration
of ME. Given a relaxation parameter 
$\theta \in (0,1]$,
we then compute the next iterate $x_{k+1}$
as the convex combination
$x_{k+1}=(1-\theta)x_{k} + \theta \tilde x_{k+1}$
between the previous iterate $x_{k}$ and
$\tilde x_{k+1}$. 
We recall that given
$x_{k}$ such that 
$\nabla f(x_k) \neq 0$, an iteration of ME first computes 
$y_k=x_k-t_k \nabla f(x_k)$ such that
$f(y_k)=f(x_k)$.
If $\nabla f(y_k)$ and
$\nabla f(x_k)$ are linearly independent,
then $\tilde x_{k+1}$
minimizes $f$ in the affine space 
$x_k+\mbox{Span}(\nabla f(x_k), \nabla f(y_k))$. It is worth mentioning that
the minimizer $x_k+\alpha_k \nabla f(x_k) + \beta_k \nabla f(y_k)$
of $f$ in this affine space is known
in analytic form. If $\nabla f(y_k)$ and
$\nabla f(x_k)$ are linearly dependent,
then $\tilde x_{k+1}=(1/2)(x_k+y_k)$.
The statement of
Relaxed ME is given in 
Algorithm \ref{Alg1}.
As shown in \cite{behling2026method}, the formula given for $t_k$ in this algorithm
ensures that $f(y_k)=f(x_k)$. Moreover,
as shown in Lemma \ref{lemgk}, the sequences
$(g_k)$ and $(r_k)$ 
computed iteratively
satisfy 
$g_k=\nabla f(x_k)$ and $r_k=\nabla f(y_k)$
for all $k \geq 0$.

\begin{algorithm}
\caption{Relaxed Method of Ellipcenters for Convex Quadratic
Minimization (RelaxME)}
\label{Alg1}

\begin{algorithmic}[1]
\Statex \textbf{Initialization.}
\State Let $A \in \mathbb{R}^{n \times n}$ be symmetric positive definite,
$b \in \mathbb{R}^{n}$, and $x_0 \in \mathbb{R}^{n}$.
\State Set
\[
\widetilde{x}_0 = x_0,\qquad
g_0 = \nabla f(x_0),\qquad
\theta \in (0,1],\qquad
k = 0.
\]

\While{$\lVert g_k \rVert \geq \varepsilon$}

    \State $w_k = A g_k$

    \State $\displaystyle
        t_k = \frac{2\lVert g_k\rVert^2}
        {g_k^{\top}w_k}$

    \State $y_k = x_k - t_k g_k$

    \State $r_k = g_k - t_k w_k$

    \If{$g_k$ and $r_k$ are linearly independent}

        \State $\displaystyle
        (\alpha_k,\beta_k)
        =
        \operatorname*{arg\,min}_{\alpha,\beta}
        f\bigl(x_k+\alpha g_k+\beta r_k\bigr)$

        \State $\widetilde{x}_{k+1}
        = x_k+\alpha_k g_k+\beta_k r_k$

    \Else

        \State $\displaystyle
        \widetilde{x}_{k+1}
        = \frac{1}{2}(x_k+y_k)$

        \State $x_{k+1}=
        \widetilde{x}_{k+1}$
\State $g_{k+1}
        = \nabla f(\widetilde{x}_{k+1})$

        \State {\textbf{break}}

    \EndIf

    \If{$k=0$}

        \State $x_{k+1} = \widetilde{x}_{k+1}$

        \State $g_{k+1}
        = \nabla f(\widetilde{x}_{k+1})$

    \Else
    \State $x_{k+1}
    = (1-\theta)x_k+\theta\widetilde{x}_{k+1}$

    \State $g_{k+1}
    = (1-\theta)g_k
    +\theta\nabla f(\widetilde{x}_{k+1})$

    \EndIf

    \State $k = k+1$

\EndWhile
\end{algorithmic}
\end{algorithm}

In the next subsection, we prove several properties of RelaxME.

\subsection{Properties of RelaxME}

In the following lemma, we show that for every $k \geq 0$, $g_k$
is the gradient of $f$ at $x_k$ and
$r_k$ is the gradient of $f$ at $y_k$.
\begin{lemma}\label{lemgk} For every $k \geq 0$, we
have $g_k=\nabla f(x_k)$ and $r_k=\nabla f(y_k)$.
\end{lemma}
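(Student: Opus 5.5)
The argument is an induction on $k$ that uses only the affine form of the gradient, $\nabla f(x)=Ax-b$. For any points $u,v$ and scalars $\lambda,\mu$ with $\lambda+\mu=1$ we have $\nabla f(\lambda u+\mu v)=\lambda\nabla f(u)+\mu\nabla f(v)$, and for any direction $d$ and scalar $t$ we have $\nabla f(u-td)=\nabla f(u)-tAd$. These two facts are essentially all that is needed.

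The key step is to show that $g_k=\nabla f(x_k)$ implies $r_k=\nabla f(y_k)$, for any iteration $k$ reached by the loop. This follows because $y_k=x_k-t_kg_k$, so
$\nabla f(y_k)=Ax_k-b-t_kAg_k=g_k-t_kw_k=r_k$, using $w_k=Ag_k$. This holds whatever the value of $t_k$; $t_k$ is well defined because $g_k\neq 0$ inside the loop and $A$ is positive definite.

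It then remains to prove $g_k=\nabla f(x_k)$ by induction.
\begin{itemize}
\item \emph{Base case.} $g_0=\nabla f(x_0)$ by the initialization.
\item \emph{Inductive step.} Assume $g_k=\nabla f(x_k)$. There are three branches.
\begin{itemize}
\item In the linearly dependent branch and in the case $k=0$, the algorithm sets $x_{k+1}=\widetilde x_{k+1}$ and $g_{k+1}=\nabla f(\widetilde x_{k+1})$ explicitly, so the claim is immediate. In the dependent branch the algorithm breaks, so no further $r$ is generated, but $g_{k+1}$ is still correct.
\item Otherwise $x_{k+1}=(1-\theta)x_k+\theta\widetilde x_{k+1}$. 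By affinity,
$\nabla f(x_{k+1})=(1-\theta)\nabla f(x_k)+\theta\nabla f(\widetilde x_{k+1})$. Applying the induction hypothesis $\nabla f(x_k)=g_k$ turns this into exactly the update formula for $g_{k+1}$.
\end{itemize}
\end{itemize}
Combining the induction with the key step gives both identities for every $k$ at which the algorithm defines these quantities.

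No real obstacle is expected; the whole proof is bookkeeping. The one point needing care is to verify the induction hypothesis in each branch of the algorithm, including the terminating branch, and to note that $r_k$ is only defined, and only claimed, while the loop runs.
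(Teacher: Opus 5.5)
Your proposal is correct and follows the paper's proof essentially step for step. It uses induction on $k$ with the affine form $\nabla f(x)=Ax-b$ to match the update $g_{k+1}=(1-\theta)g_k+\theta\nabla f(\widetilde x_{k+1})$, handles the $k=0$ and \textbf{break} branches by definition, and finishes with the direct computation $r_k=g_k-t_kAg_k=A(x_k-t_kg_k)-b=\nabla f(y_k)$; the only cosmetic difference is that the paper treats $k=1$ as part of the base case instead of handling the $k=0$ branch inside the inductive step.
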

\begin{proof} The proof is by induction on $k$.
For $k=0$, the initialization step of the algorithm gives $g_0=\nabla f(x_0)$
and for $k=1$ the algorithm computes
$g_1=\nabla f(x_1)$.
Now assume that for $k \geq 1$ we have
$g_k=\nabla f(x_k)$. We want to show that
$g_{k+1}=\nabla f(x_{k+1})$.
At iteration $k+1$, if the algorithm
stops at the ${\textbf{break}}$
command at line 15, we have $g_{k+1}=\nabla f(x_{k+1})$
by definition of $g_{k+1}$.
Otherwise, we have
${x}_{k+1} = (1-\theta)x_k+\theta {\tilde x}_{k+1}$, which implies
\begin{eqnarray}
\nabla f(x_{k+1})&= & Ax_{k+1}-b\nonumber\\
&= & A\Big((1-\theta)x_k+\theta {\tilde x}_{k+1}\Big)-b\nonumber\\
&= & (1-\theta)(Ax_{k} - b)+ \theta(A \tilde x_{k+1}-b)\nonumber\\
&= & (1-\theta) \nabla f(x_k) + \theta(A \tilde x_{k+1}-b)\nonumber\\
&= & (1-\theta)g_k+ \theta \nabla f(\tilde x_{k+1})\;\;\mbox{ (by the induction assumption)}\nonumber\\
&= & g_{k+1}\;\;\mbox{(by definition of $g_{k+1}$)}\label{pr21}
\end{eqnarray}
which shows the relation $g_k=\nabla f(x_k)$ for all $k \geq 0$. Finally,
$$
r_k=g_k-t_kw_k=\nabla f(x_k)-t_k A g_k =
Ax_k-b-t_k A g_k=Ay_k-b=\nabla f(y_k),
$$
which achieves the proof of the lemma.
\end{proof}

Observe that the condition
$g_k=\nabla f(x_k)$ and 
$r_k=\nabla f(y_k)$
linearly dependent can
be written
$\|\nabla f(x_k)\|=\|\nabla f(y_k)\|$.
Indeed, it is easy to check
that 
$\langle \nabla f(x_k),\nabla f(y_k) \rangle = -\|\nabla f(x_k)\|^2$
and vectors $g_k$ and $r_k$
are linearly dependent
if and only if 
$|\cos(\angle (\nabla f(x_k), \nabla f(y_k)))|=1$ which can be written equivalently
$$
1=\left| 
\frac{\langle \nabla f(x_k),\nabla f(y_k) \rangle}{\|\nabla f(x_k)\|\|\nabla f(y_k)\|}
\right|=\frac{\|\nabla f(x_k)\|}{\|\nabla f(y_k)\|}.
$$

We now introduce error terms
\begin{equation}\label{defeketk}
e_k=x_k-x_* \mbox{ and }\tilde e_k=\tilde x_k-x_*
\end{equation}
that will be useful in our analysis as well as 
\begin{equation}\label{defdk}
d_k=e_k-\tilde e_{k+1}.
\end{equation}

Another interesting property of RelaxME, shown
in the following lemma, is that if at a given iteration $k$,
vectors $g_k$ and $r_k$ are linearly dependent then this is the last iteration
of the method which outputs the optimal solution $x_{*}$. This justifies the 
{\textbf{break}} command at line 15 of the algorithm.

\begin{lemma}\label{remterminates}
If $g_k$ and $r_k$ are linearly dependent
we have $x_{k+1}=\tilde x_{k+1}=x_*$, i.e.,
RelaxME has computed the minimum of $f$ in a finite number of
iterations.
\end{lemma}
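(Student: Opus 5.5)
Write $g=g_k=\nabla f(x_k)\neq 0$ and $r=r_k=\nabla f(y_k)$; by Lemma \ref{lemgk} these are the actual gradients. In the linearly dependent branch the algorithm sets $x_{k+1}=\tilde x_{k+1}=\frac12(x_k+y_k)$, so it suffices to show $\nabla f(\tilde x_{k+1})=0$. Since $f$ is quadratic,
\[
\nabla f(\tfrac12(x_k+y_k))=\tfrac12(g+r).
\]
The claim therefore reduces to showing that linear dependence of $g$ and $r$ forces $r=-g$.

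The key step uses the identity already observed after Lemma \ref{lemgk}: $\langle g,r\rangle=-\|g\|^2$. It follows from $r=g-t_kAg$ with $t_k=2\|g\|^2/(g^TAg)$:
\[
\langle g,r\rangle=\|g\|^2-t_k\, g^TAg=\|g\|^2-2\|g\|^2=-\|g\|^2 .
\]
Now suppose $g$ and $r$ are linearly dependent. Since $g\neq0$, we may write $r=\lambda g$ for some scalar $\lambda$. Substituting into the identity gives $\lambda\|g\|^2=-\|g\|^2$, hence $\lambda=-1$ and $r=-g$. Consequently $\nabla f(\tilde x_{k+1})=\frac12(g-g)=0$.

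By strong convexity, $x_*$ is the unique stationary point, so $\tilde x_{k+1}=x_*$. The algorithm sets $x_{k+1}=\tilde x_{k+1}$ in this branch, so $x_{k+1}=x_*$ as well; this also covers $k=0$. The computed $g_{k+1}$ is then $0$, which justifies the break.

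There is no real obstacle here. The only points needing care are ruling out the degenerate case $g=0$, which is excluded because the loop runs only while $\|g_k\|\ge\varepsilon>0$, and noting that dependence can be expressed as $r=\lambda g$ precisely because $g\neq 0$.
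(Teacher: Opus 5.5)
Your proof is correct, but it takes a different route from the paper's.

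The paper converts linear dependence into the statement that $g_k$ is an eigenvector of $A$ (via $r_k=g_k-t_kAg_k$), with eigenvalue $\lambda>0$. It then computes $t_k=2/\lambda$, so the midpoint is $\tilde x_{k+1}=x_k-\frac{1}{\lambda}g_k$. Finally it uses $Ae_k=g_k$ to get $e_k=\frac{1}{\lambda}g_k$. So the paper shows that the midpoint step coincides with the Newton step $x_k-A^{-1}g_k$, which lands exactly at $x_*$.

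You never touch the spectral structure or $A^{-1}$. You use the identity $\langle g_k,r_k\rangle=-\|g_k\|^2$ to force the proportionality constant to be $-1$. Affinity of $\nabla f$ then gives $\nabla f(\tilde x_{k+1})=\frac12(g_k+r_k)=0$ directly. This is slightly more economical and fits the introduction's remark that the midpoint is the Cauchy point, with gradient $\frac12(\nabla f(x_k)+\nabla f(y_k))$. It also makes explicit that dependence is equivalent to $r_k=-g_k$, consistent with the paper's observation that dependence amounts to $\|\nabla f(x_k)\|=\|\nabla f(y_k)\|$.

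The paper's version buys the eigenvector characterization of the degenerate case. It reuses that characterization in the proof of Theorem \ref{finiteconv}, both for the single-eigenvalue case and for the case where one spectral component of $g_0$ vanishes. You could recover it too: $r_k=-g_k$ gives $Ag_k=(2/t_k)g_k$.
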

\begin{proof}
If $g_k$ and $r_k$ are linearly dependent,
there is $\lambda$ such that $Ag_k=\lambda g_k$. Therefore $\lambda>0$ is eigenvalue and $g_k$ eigenvector
of $A$. It follows that
$$
t_k=\frac{2\|g_k\|^2}{g_k^T A g_k}=\frac{2}{\lambda}
$$
and from the iterations of the method, we have
\begin{equation}\label{ekfc0}
\tilde x_{k+1}=\frac{1}{2}(x_k+y_k)=x_k-\frac{t_k}{2}g_k=x_k-\frac{1}{\lambda}g_k.
\end{equation}
Since $Ax_{*}=b$, we then have 
\begin{equation}\label{Aek}
A e_k  \stackrel{\eqref{defeketk}}{=}A(x_k-x_*)=Ax_k-b=\nabla f(x_k)=g_k
\end{equation}
where the last equality comes from Lemma \ref{lemgk}. Therefore
we have 
\begin{equation}\label{ekfc1}
e_k=A^{-1}g_k=\frac{1}{\lambda}g_k
\end{equation}
and 
\begin{equation}
x_{k+1} = \tilde x_{k+1} \stackrel{\eqref{ekfc0},\eqref{ekfc1}}{=} x_k-e_k\stackrel{\eqref{defeketk}}{=}x_*.
\end{equation}
\end{proof}

The following lemma provides
additional insights on RelaxME which are
useful for its convergence analysis.

\begin{lemma}
The following holds 
for error terms $(e_k)$, $(\tilde e_k)$, and
sequence $(d_k)$: for every $k \geq 0$ such that
$g_k$ and $r_k$ are linearly independent,
\begin{itemize}
\item[(A)] $d_k$ and $\tilde e_{k+1}$
are $A$-conjugate;
\item[(B)] we have
\begin{equation}\label{ekortpitl}
\|e_k\|_A^2 = \|\tilde e_{k+1}\|_A^2 +\|d_k\|_A^2;
\end{equation}
\item[(C)] we have
\begin{equation}\label{crucekp1l}
\|e_{k+1}\|_A^2 = \|\tilde e_{k+1}\|_A^2 + (1-\theta)^2 \|d_{k}\|_A^2. 
\end{equation}
\end{itemize}
\end{lemma}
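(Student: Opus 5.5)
Since $g_k$ and $r_k$ are linearly independent, $\tilde x_{k+1}$ is the minimizer of $f$ over the affine plane $\Pi_k=x_k+\mbox{span}\{g_k,r_k\}$. The plan is to turn this optimality into an $A$-orthogonality statement, which gives (A). Then (B) follows by Pythagoras in $\|\cdot\|_A$, and (C) by writing $e_{k+1}$ as $\tilde e_{k+1}$ plus a multiple of $d_k$.

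For (A), the first-order optimality conditions of $\min f(x_k+\alpha g_k+\beta r_k)$ give $\langle \nabla f(\tilde x_{k+1}), g_k\rangle=0$ and $\langle \nabla f(\tilde x_{k+1}), r_k\rangle=0$. Since $Ax_*=b$, we have $\nabla f(\tilde x_{k+1})=A\tilde e_{k+1}$. Moreover $d_k=e_k-\tilde e_{k+1}=x_k-\tilde x_{k+1}=-(\alpha_k g_k+\beta_k r_k)\in\mbox{span}\{g_k,r_k\}$. Hence
\[
\langle d_k,\tilde e_{k+1}\rangle_A=\langle d_k, A\tilde e_{k+1}\rangle=-\alpha_k\langle g_k,\nabla f(\tilde x_{k+1})\rangle-\beta_k\langle r_k,\nabla f(\tilde x_{k+1})\rangle=0 .
\]
By Lemma \ref{lemgk}, $g_k,r_k$ are indeed the gradients at $x_k,y_k$, but here only their membership in the spanning set of the plane matters.

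For (B), expand $\|e_k\|_A^2=\|\tilde e_{k+1}+d_k\|_A^2=\|\tilde e_{k+1}\|_A^2+2\langle \tilde e_{k+1},d_k\rangle_A+\|d_k\|_A^2$. The cross term vanishes by (A).

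For (C), the step to check is the case split in Algorithm \ref{Alg1}. If $k\ge1$, then $x_{k+1}=(1-\theta)x_k+\theta\tilde x_{k+1}$. Subtracting $x_*$ gives
\[
e_{k+1}=(1-\theta)e_k+\theta\tilde e_{k+1}=\tilde e_{k+1}+(1-\theta)(e_k-\tilde e_{k+1})=\tilde e_{k+1}+(1-\theta)d_k,
\]
and the same Pythagorean expansion with (A) yields \eqref{crucekp1l}. At $k=0$ the algorithm sets $x_1=\tilde x_1$, so $e_1=\tilde e_1$. Then \eqref{crucekp1l} holds with $\theta$ effectively equal to $1$, and with the stated $\theta$ only as the inequality $\le$. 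I would record this caveat, or read (C) for $k\ge1$, which is the case where the relaxation is actually applied. The only genuine content is the optimality-to-conjugacy step in (A); the rest is routine algebra.
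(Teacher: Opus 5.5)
Your proof is correct and follows the paper's argument essentially line by line. Optimality of $\tilde x_{k+1}$ on $x_k+\mbox{span}\{g_k,r_k\}$ gives $A\tilde e_{k+1}=\nabla f(\tilde x_{k+1})\perp \mbox{span}\{g_k,r_k\}\ni d_k$, then Pythagoras in $\|\cdot\|_A$ gives (B), and $e_{k+1}=\tilde e_{k+1}+(1-\theta)d_k$ gives (C). Your $k=0$ caveat is also well founded: the paper's proof of (C) applies the relaxed update at every $k$, whereas Algorithm \ref{Alg1} sets $x_1=\tilde x_1$, so at $k=0$ (C) holds only as $\le$ (strictly when $\theta<1$, since $d_0\neq 0$), and that inequality is all the later rate estimates need.
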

\begin{proof} (A) Observe that if $g_k$ and $r_k$ are linearly independent,
\begin{equation}\label{span1}
d_k=x_k-\tilde x_{k+1}
\in \mbox{Span}(g_k,r_k).
\end{equation}
Next, by the optimality conditions for the problem
solved in the linearly independent case,
$$
\langle \nabla f(\tilde x_{k+1}),g_k\rangle = 0  \mbox{ and }\langle \nabla f(\tilde x_{k+1}),r_k\rangle=0
$$
which implies
\begin{equation}\label{span2}
\nabla f(\tilde x_{k+1})
\in \mbox{Span}(g_k,r_k)^{\perp}.
\end{equation}
Combining \eqref{span1} and \eqref{span2}, we obtain
\begin{equation}\label{span3}
0=d_k^T \nabla f(\tilde x_{k+1}) =
d_k^T[A \tilde x_{k+1} -b]=d_k^T A \tilde e_{k+1}=
\langle d_k, \tilde e_{k+1} \rangle_A, 
\end{equation}
which achieves the proof of (A).

\par (B) We deduce from (A) that
\begin{eqnarray}
\|e_k\|_A^2 & = & \|\tilde e_{k+1}+d_k\|_A^2 \nonumber \\ 
&= & \|\tilde e_{k+1}\|_A^2 +\|d_k\|_A^2 + 2\langle d_k, \tilde e_{k+1} \rangle_A \nonumber\\
&\stackrel{\eqref{span3}}{=} & \|\tilde e_{k+1}\|_A^2 +\|d_k\|_A^2,\label{ekortpit}
\end{eqnarray}
which proves \eqref{ekortpitl}.

\par (C) Next, we can write
\begin{eqnarray}
e_{k+1} & = & x_{k+1}-x_* \nonumber \\ 
&= & (1-\theta)x_k + \theta \tilde x_{k+1} -x_* \nonumber\\
&= & (1-\theta)(x_k-x_*) + \theta (\tilde x_{k+1} -x_*) \nonumber\\
&= & (1-\theta)e_k + \theta \tilde e_{k+1} \nonumber\\
&= & (1-\theta)({\tilde e}_{k+1}+d_k) + \theta \tilde e_{k+1} \nonumber\\
&= & \tilde e_{k+1}+(1-\theta)d_k.\label{ekortpit2}
\end{eqnarray}
Using \eqref{span3}, \eqref{ekortpit2}, and
the Pythagorean theorem, we have
\begin{equation}\label{crucekp1}
\|e_{k+1}\|_A^2 = \|\tilde e_{k+1}\|_A^2 + (1-\theta)^2 \|d_{k}\|_A^2, 
\end{equation}
which achieves the proof of \eqref{crucekp1l}.
\end{proof}

\par In what follows we denote by 
$0< \lambda_1 \leq \lambda_2 \leq \ldots \leq \lambda_n$ the eigenvalues of matrix $A$.

The following lemma
will be useful for the convergence analysis of RelaxME, shown in Section \ref{sec:convergence}.

\begin{lemma}\label{lemineqcruc1}
For every $k \geq 0$, we have
\begin{equation}\label{ineqfriststep}
f(\tilde x_{k+1})-f(x_*) \leq \rho^4 (f(x_k)-f(x_*))
\end{equation}
where 
\begin{equation}\label{defrho}
\rho = \frac{\kappa-1}{\kappa+1} 
\mbox{ with }\kappa=\frac{\lambda_n}{\lambda_1}.
\end{equation}

\end{lemma}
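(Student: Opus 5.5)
The plan is to compare $\tilde x_{k+1}$ with two consecutive Cauchy steps started at $x_k$, and then apply the classical Kantorovich bound for the Cauchy method twice. Recall that for a strictly convex quadratic, one exact line-search gradient step $z\mapsto \mbox{Cauchy}(z)$ satisfies
\[
f(\mbox{Cauchy}(z))-f(x_*) \leq \rho^2\,(f(z)-f(x_*)),
\]
with $\rho$ as in \eqref{defrho}. This is the Kantorovich inequality bound, and we take it as the standard rate $\eta=\rho^2$ of steepest descent mentioned in the introduction. It therefore suffices to show
\[
f(\tilde x_{k+1}) \leq f(\mbox{Cauchy}(\mbox{Cauchy}(x_k))),
\]
because chaining the one-step bound twice then gives the factor $\rho^4$.

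We first treat the degenerate cases. If $g_k=0$, then $x_k=x_*$ and there is nothing to prove. If $g_k$ and $r_k$ are linearly dependent, Lemma \ref{remterminates} gives $\tilde x_{k+1}=x_*$, and \eqref{ineqfriststep} holds trivially.

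In the linearly independent case, by Lemma \ref{lemgk} we have $g_k=\nabla f(x_k)$ and $r_k=\nabla f(y_k)$, and $\tilde x_{k+1}$ minimizes $f$ over $\Pi_k=x_k+\mbox{Span}(g_k,r_k)$. We verify that the double Cauchy point lies in $\Pi_k$ in three steps.
\begin{itemize}
\item[(i)] Since $t_k=2\|g_k\|^2/g_k^TAg_k$ is twice the exact line-search step, $z_1:=\mbox{Cauchy}(x_k)=x_k-\frac{t_k}{2}g_k=\frac12(x_k+y_k)\in\Pi_k$.
\item[(ii)] By affinity of $\nabla f$, $\nabla f(z_1)=\frac12(g_k+r_k)\in\mbox{Span}(g_k,r_k)$.
\item[(iii)] Hence $z_2:=\mbox{Cauchy}(z_1)=z_1-s\nabla f(z_1)\in\Pi_k$ for the exact step $s$. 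If $\nabla f(z_1)=0$, then $z_2=z_1=x_*$.
\end{itemize}
Since $\tilde x_{k+1}$ minimizes $f$ over $\Pi_k$ and $z_2\in\Pi_k$, we get $f(\tilde x_{k+1})\le f(z_2)$. Applying the one-step Cauchy bound twice then yields
\[
f(z_2)-f(x_*)\le\rho^2(f(z_1)-f(x_*))\le\rho^4(f(x_k)-f(x_*)).
\]

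The main obstacle is justifying the one-step bound with the constant $\rho^2$. I would cite it as the classical Kantorovich result, or prove it quickly. Writing $g=\nabla f(z)$, one has
\[
f(z)-f(\mbox{Cauchy}(z)) = \frac{\|g\|^4}{2\,g^TAg}, \qquad f(z)-f(x_*)=\tfrac12 g^TA^{-1}g .
\]
The reduction ratio is then $1-\|g\|^4/\big((g^TAg)(g^TA^{-1}g)\big)$. The Kantorovich inequality
\[
(g^TAg)(g^TA^{-1}g)\le \frac{(\lambda_1+\lambda_n)^2}{4\lambda_1\lambda_n}\,\|g\|^4
\]
bounds this ratio by $1-\frac{4\kappa}{(\kappa+1)^2}=\rho^2$. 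Note that the bound must be stated in function values, equivalently in $\|\cdot\|_A^2$, and not in $\|\cdot\|_A$, which is why the exponent becomes $4$.
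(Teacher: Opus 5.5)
Your proof is correct. It is not the argument the paper gives right after the lemma, but it is essentially the paper's second proof in the Appendix. There, $z_k=x_k-\frac{t_k}{2}g_k$ is your $z_1$, and the identity $g_k+r_k=2\nabla f(z_k)$ places the second Cauchy point $z_k-b_k\nabla f(z_k)$ in $x_k+\mbox{Span}(g_k,r_k)$. The Kantorovich inequality is then applied twice, exactly as you do.

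The main-text proof instead uses $\mbox{Span}(g_k,r_k)=\mbox{Span}(g_k,Ag_k)$. This gives $\|\tilde x_{k+1}-x_*\|_A\le\|P(A)e_k\|_A$ for every polynomial $P$ of degree at most $2$ with $P(0)=1$. The paper then picks $P(\lambda)=\bigl(1-\frac{2\lambda}{\lambda_1+\lambda_n}\bigr)^2$, whose absolute value is at most $\rho^2$ on $[\lambda_1,\lambda_n]$.

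Your route is more elementary, and it makes rigorous the heuristic from the introduction that one ME step is at least as good as two Cauchy steps. However, it is intrinsically capped at $\rho^4$. Take $A$ with exactly two distinct eigenvalues and a gradient whose components in the two eigenspaces have equal norm. Then the Kantorovich inequality is tight at every Cauchy step, so two Cauchy steps reduce $f-f(x_*)$ by exactly $\rho^4$.

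The polynomial route has no such ceiling. Substituting the normalized Chebyshev polynomial gives the sharper factor $\tilde\rho^4$ of Theorem \ref{theo2} with no extra work. Likewise, the choice $P(\lambda)=(1-\lambda/\lambda_1)(1-\lambda/\lambda_2)$ gives $P(A)=0$ in the two-eigenvalue case, which is the mechanism behind the one-step termination in Theorem \ref{finiteconv}. Neither result can be reached by comparison with Cauchy steps.
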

\begin{proof} We consider two cases:
the case where $g_k$ and $r_k$
are linearly independent and the case where they are
linearly dependent.\\

\par {\textbf{Case where $g_k$ and $r_k$ are linearly independent.}}\footnote{The reader can find a second proof of
\eqref{ineqfriststep} in this case in the Appendix.}

In this case, $\tilde x_{k+1} \in x_k+ \mbox{Span}(g_k,r_k)$.
Since  $r_k=g_k-t_kAg_k$ with $t_k>0$, we have
$\mbox{Span}(g_k,r_k)=\mbox{Span}(g_k,Ag_k)$.
Therefore, $\tilde x_{k+1}$ minimizes $f$ on the affine 2-dimensional Krylov space
$x_k+\mbox{Span}(g_k,Ag_k)$ and for a point
$x_k(\alpha,\beta)=x_k + \alpha g_k + \beta Ag_k$ in this affine space the error
is
\begin{eqnarray*}
x_k + \alpha g_k + \beta Ag_k-x_* & \stackrel{\eqref{Aek}}{=} &
e_k+\alpha Ae_k+ \beta A^2 e_k=P(A) e_k
\end{eqnarray*}
where $P$ is the polynomial of degree 2 given by
$$
P(\lambda)=1+\alpha \lambda+ \beta \lambda^2
$$
satisfying $P(0)=1$.
For any polynomial $P$
of degree 2
with $P(0)=1$, we therefore have
\begin{eqnarray}
f(\tilde x_{k+1})-f(x_*) &= & \frac{1}{2} \|\tilde x_{k+1}-x_*\|_A^2\nonumber \\
& \leq & \frac{1}{2}\|P(A)e_k\|_A^2 \label{anyphave}.
\end{eqnarray}
Now consider the polynomial of degree 2
$$
P(\lambda)=\left(1-\frac{2 \lambda}{\lambda_1+\lambda_n}\right)^2
$$
that satisfies $P(0)=1$. Observe that for every 
$\lambda \in [\lambda_1,\lambda_n]$, we have
$$
\left \lvert 1- \frac{2\lambda}{\lambda_{1}+\lambda_{n}} \right \rvert \leq \rho.
$$
It follows that 
\begin{equation}\label{ineqpoly}
\lvert P(\lambda)\rvert \leq \rho^2\;\;\forall \lambda \in [\lambda_1,\lambda_n].
\end{equation}
Let $(v_i)_{i=1}^n$ be an orthonormal basis of eigenvectors
of $A$ with $Av_i=\lambda_i v_i$. Writing 
$e_k=\sum_{i=1}^n a_i v_i$, we have
$$
P(A)e_k=\sum_{i=1}^n a_i P(\lambda_i) v_i
$$
which gives
\begin{equation}\label{bsupP}
\|P(A) e_k\|_A^2 = \sum_{i=1}^n \lambda_i a_i^2 P(\lambda_i)^2.
\end{equation}
Using \eqref{ineqpoly}, we obtain
$\lvert P(\lambda_i) \rvert \leq \rho^2$ for all $i=1,\ldots,n,$ which, plugged
into \eqref{bsupP}, yields
\begin{equation}\label{pfinal}
\|P(A)e_k\|_A^2 \leq \rho^4 \sum_{i=1}^n \lambda_i a_i^2 = \rho^4 \|
e_k\|_A^2.
\end{equation}
Combining \eqref{anyphave} and \eqref{pfinal}, we obtain
\begin{equation}\label{crucxtilde}
f(\tilde x_{k+1})-f(x_*) \leq \frac{1}{2} \rho^4 \|e_k\|_A^2 =\rho^4(f(x_k)-f(x_*)),
\end{equation}
which achieves the proof of \eqref{ineqfriststep}
in the linearly independent case.\\

\par {\textbf{Case where $g_k$ and $r_k$ are linearly dependent.}}
In this case, using Lemma \ref{remterminates}, we have
$\tilde x_{k+1} = x_*$
 and 
\eqref{ineqfriststep} holds trivially since the left-hand side
$f(\tilde x_{k+1})-f(x_*)$ is equal to zero.
\end{proof}

\subsection{Convergence Analysis}\label{sec:convergence}

The following theorem shows that RelaxME converges linearly to
$x_*$, the minimum of $f$, at a rate a least equal to 
\begin{equation}\label{linrate}
\bar \eta:=(1-\theta) + \theta\left( \frac{\kappa-1}{\kappa+1}\right)^4 \in [0, 1)
\end{equation}
where $\kappa=\frac{\lambda_n}{\lambda_1}$ is the condition number of matrix $A$.

\begin{theorem}\label{theorem1}\label{theo1}
Let $x_0 \in \mathbb{R}^n$ be any initial point for RelaxME. Let $0 < \lambda_1 \leq \lambda_2 \leq \dots \leq \lambda_n$ be the eigenvalues of $A$ and let
$\kappa$
be the condition number of $A$. Then, RelaxME generates a sequence $\{x_k\} \subset \mathbb{R}^n$ that converges to $x_{*}$, the unique minimizer of the quadratic $f$. Moreover, the convergence is linear and there is
$
\eta \leq \bar \eta
$
where $\bar \eta$ is given by 
\eqref{linrate}
such that for all $k \geq 0$, we have
\begin{equation}\label{eq1_teo1}
    f(x_k) - f(x_*) \leq \eta^k (f(x_0) - f(x^*))
 \end{equation}
\textit{and}
\begin{equation}\label{eq2_teo1}
    \|x_k - x_*\|_A \leq \sqrt{\eta^{k}} \|x_0 - x_*\|_A.
\end{equation}
\end{theorem}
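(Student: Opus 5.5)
The plan is to prove a one-step contraction
\[
f(x_{k+1})-f(x_*)\le \bar\eta\,\bigl(f(x_k)-f(x_*)\bigr)
\]
for every $k\ge 0$. Iterating this gives \eqref{eq1_teo1} with $\eta=\bar\eta$ (or any smaller admissible constant). Then \eqref{eq2_teo1} follows at once from the identity $f(x)-f(x_*)=\frac12\|x-x_*\|_A^2$ and a square root. Convergence of $x_k$ to $x_*$ follows because $\bar\eta<1$, since $\theta>0$ and $\rho<1$. Before starting, I would note that $\bar\eta\in[0,1)$ because it is a convex combination of $1$ and $\rho^4\in[0,1)$ with positive weight $\theta$ on $\rho^4$.

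I would split into cases following the algorithm. If $g_k=0$, nothing remains to prove. If $g_k$ and $r_k$ are linearly dependent, Lemma \ref{remterminates} gives $x_{k+1}=x_*$, so the left side vanishes. If $k=0$, then $x_1=\tilde x_1$, and Lemma \ref{lemineqcruc1} gives a factor $\rho^4\le\bar\eta$.

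The main case is $k\ge1$ with $g_k,r_k$ linearly independent, where $x_{k+1}=(1-\theta)x_k+\theta\tilde x_{k+1}$. Here I would use part (C) of the preceding lemma together with part (B). Part (B) gives $\|d_k\|_A^2=\|e_k\|_A^2-\|\tilde e_{k+1}\|_A^2$, so
\[
\|e_{k+1}\|_A^2=(1-\theta)^2\|e_k\|_A^2+\bigl(1-(1-\theta)^2\bigr)\|\tilde e_{k+1}\|_A^2 .
\]
Lemma \ref{lemineqcruc1} gives $\|\tilde e_{k+1}\|_A^2\le\rho^4\|e_k\|_A^2$. Since the coefficient $1-(1-\theta)^2=\theta(2-\theta)$ is nonnegative, the factor becomes $(1-\theta)^2+\theta(2-\theta)\rho^4$.

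The only step needing care is comparing this factor with $\bar\eta=(1-\theta)+\theta\rho^4$. The difference is
\[
\bar\eta-\bigl[(1-\theta)^2+\theta(2-\theta)\rho^4\bigr]=\theta(1-\theta)-\theta(1-\theta)\rho^4=\theta(1-\theta)(1-\rho^4)\ge0 .
\]
So the sharper rate $\eta:=(1-\theta)^2+\theta(2-\theta)\rho^4\le\bar\eta$ works uniformly in all cases, because $\rho^4\le\eta$. This also explains why the theorem is phrased as ``there is $\eta\le\bar\eta$''. Alternatively, one can obtain $\bar\eta$ directly from convexity of $f$ along the segment, $f(x_{k+1})\le(1-\theta)f(x_k)+\theta f(\tilde x_{k+1})$, which is a clean fallback if any bookkeeping in the Pythagorean route fails.
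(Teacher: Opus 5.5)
Your proof is correct, but your main route is not the paper's. The paper's proof of this theorem is your fallback. Convexity of $f$ on the segment $[x_k,\tilde x_{k+1}]$ gives $f(x_{k+1})-f(x_*)\le(1-\theta)(f(x_k)-f(x_*))+\theta(f(\tilde x_{k+1})-f(x_*))$. Then \eqref{ineqfriststep} yields the factor $\bar\eta$ directly, and \eqref{eq2_teo1} follows from $f(x)-f(x_*)=\frac12\|x-x_*\|_A^2$.

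Your primary argument instead combines parts (B) and (C) of the conjugacy lemma. This uses the fact that $\tilde x_{k+1}$ is the exact minimizer of $f$ on $x_k+\mathrm{span}(g_k,r_k)$. Along the line through $x_k$ and $\tilde x_{k+1}$, the quadratic $f$ is therefore a parabola with vertex at $\tilde x_{k+1}$. The convexity inequality becomes an identity, and you get the factor $(1-\theta)^2+\theta(2-\theta)\rho^4$, which is smaller than $\bar\eta$ by $\theta(1-\theta)(1-\rho^4)$. This is the mechanism the paper reserves for Theorem \ref{theo3} (with the Chebyshev constant $\tilde\rho$ in place of $\rho$). In effect you have proved a sharper statement and deduced this one from it.

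The trade-off is as follows. The convexity route is shorter and needs only the function-value contraction \eqref{ineqfriststep} for $\tilde x_{k+1}$, so it would survive an inexact inner minimization. Yours needs the exact $A$-conjugacy of $d_k$ and $\tilde e_{k+1}$. It also needs a separate treatment of $k=0$, where part (C) does not apply because the algorithm sets $x_1=\tilde x_1$; you supply this correctly via $\rho^4\le\eta$.
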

\begin{proof}
Using the update for $x_{k+1}$ in RelaxME, the convexity of $f$, and \eqref{ineqfriststep}, we have
\begin{eqnarray}
f(x_{k+1})-f(x_*) & \leq  & (1-\theta)(f(x_k)-f(x_*)) + \theta(f(\tilde x_{k+1})-f(x_*)) \nonumber\\
& \stackrel{\eqref{ineqfriststep}}{\leq} & ((1-\theta)+\theta \rho^4)(f(x_k)-f(x_*)) \nonumber \\
& = & {\bar \eta} (f(x_k)-f(x_*)) \nonumber
\end{eqnarray}
where $\rho$ is given by \eqref{defrho}, which achieves the proof of \eqref{eq1_teo1}.

Next, we have
\begin{eqnarray}
f(x_k)-f(x_*)& = & \frac{1}{2}x_k^T A x_k - b^T x_k - \frac{1}{2}x_*^T A x_* + b^T x_* \nonumber\\
& = & \frac{1}{2}x_k^T A x_k - x_*^T A x_k + \frac{1}{2}x_*^T A x_*\nonumber\\
& = & \frac{1}{2} \|e_k\|_A^2 =  \frac{1}{2} \|x_k-x_*\|_A^2 \label{pr22} \\
& \leq &  {\bar \eta}^k (f(x_0)-f(x_*))  \nonumber\\
& = & \frac{1}{2} {\bar \eta}^k \|x_0-x_*\|_A^2  \nonumber
\end{eqnarray}
which achieves the proof of \eqref{eq2_teo1}.
\end{proof}

\par We now provide an iteration complexity result for the relaxed method of ellipcenters
which is an immediate 
Corollary of 
Theorem \ref{theo1}. It provides an explicit expression for the number of iterations
$K$
for
$f(x_k) - f(x_{*})$ to be below some desired tolerance $\epsilon$.
\begin{corollary}
Let $(x_k)$ be the sequence generated by RelaxME and $\epsilon>0$ be a given positive real number. Then $f(x_K) - f(x_{*}) \leq \epsilon$,  for every $K$ such that
$$ K \geq \frac{1}{\ln(1/\bar \eta)} \emph{ln}\,\left( \frac{f(x_{0}) - f(x_{*})}{\epsilon} \right). $$
Moreover,  $\|x_N - x_{*}\|_A \leq \epsilon $, for every $N$ satisfying
$$ N \geq \frac{2}{\ln(1/\bar \eta)} \emph{ln}\,\left( \frac{\|x_0 - x_{*}\|_A}{\epsilon} \right). $$
\end{corollary}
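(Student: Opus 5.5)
The plan is to deduce both bounds directly from the two linear-rate estimates \eqref{eq1_teo1} and \eqref{eq2_teo1} of Theorem \ref{theo1}, used in the form with the explicit rate $\bar \eta$. The proof of that theorem actually establishes $f(x_{k+1})-f(x_*)\le \bar\eta\,(f(x_k)-f(x_*))$, so I will work with $f(x_K)-f(x_*)\le \bar\eta^K(f(x_0)-f(x_*))$ and $\|x_N-x_*\|_A\le \bar\eta^{N/2}\|x_0-x_*\|_A$.

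I first dispose of the degenerate cases. If $\theta=1$ and $\kappa=1$, then $\bar\eta=0$. In that situation $A$ has a single eigenvalue, so $g_0$ and $r_0$ are linearly dependent, and Lemma \ref{remterminates} gives $x_1=x_*$, after which the algorithm stops. The conclusion then holds trivially for every $K\ge 1$, reading $1/\ln(1/\bar\eta)=0$. Likewise, if $f(x_0)-f(x_*)\le\epsilon$, the right-hand side is nonpositive and the claim is immediate, since $f(x_K)-f(x_*)$ is nonincreasing in $K$ by the theorem. So I will assume $0<\bar\eta<1$, which holds because $\theta>0$ and $\rho<1$, and $f(x_0)-f(x_*)>\epsilon$.

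For the first claim, it suffices to have $\bar\eta^K(f(x_0)-f(x_*))\le\epsilon$. Taking logarithms, this is equivalent to $K\ln\bar\eta\le \ln\big(\epsilon/(f(x_0)-f(x_*))\big)$. Dividing by the negative number $\ln\bar\eta=-\ln(1/\bar\eta)$ flips the inequality and gives exactly the stated condition $K\ge \ln\big((f(x_0)-f(x_*))/\epsilon\big)/\ln(1/\bar\eta)$.

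For the second claim, I proceed the same way with $\bar\eta^{N/2}\|x_0-x_*\|_A\le\epsilon$. Taking logarithms gives $(N/2)\ln(1/\bar\eta)\ge \ln(\|x_0-x_*\|_A/\epsilon)$, which is the stated bound. The factor $2$ comes from the square root in \eqref{eq2_teo1}.

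There is no real obstacle here. The only points needing care are the sign of $\ln\bar\eta$ when dividing, and the remark that the rate actually proved in the theorem is $\bar\eta$ itself, so that the bound may be stated with $\bar\eta$ rather than with the unspecified $\eta\le\bar\eta$.
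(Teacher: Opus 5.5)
Your proposal is correct and is the intended argument: the paper gives no separate proof and calls the result an immediate corollary of Theorem \ref{theo1}, i.e., one solves $\bar\eta^K(f(x_0)-f(x_*))\le\epsilon$ and $\bar\eta^{N/2}\|x_0-x_*\|_A\le\epsilon$ for $K$ and $N$ by taking logarithms, which is exactly what you do. One small caveat concerns your degenerate case $\bar\eta=0$: the convention $1/\ln(1/\bar\eta)=0$ does not rescue the statement, because under it the hypothesis admits $K=0$, where $f(x_0)-f(x_*)\le\epsilon$ can fail, so that case is better excluded as one where the formula is undefined.
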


Next, we show that the convergence
rate can be improved for RelaxME.
More precisely, we have the following:

\begin{theorem}\label{theo2}
Let $x_0 \in \mathbb{R}^n$ be any initial point for RelaxME. Let $0 < \lambda_1 \leq \lambda_2 \leq \dots \leq \lambda_n$ be the eigenvalues of $A$ and let
$\kappa$
be the condition number of $A$. Then, RelaxME converges
linearly to $x_{*}$, the unique minimizer of the quadratic $f$: there is
$
\eta \leq \eta^*
$
where 
$$
\eta^*:= 1-\theta + 
\theta \frac{(\lambda_n-\lambda_1)^4}{(\lambda_n^2+6 \lambda_1 \lambda_n + \lambda_1^2)^2}=
1-\theta+\theta
\frac{(\kappa-1)^4}{(\kappa^2+6\kappa+1)^2}
$$
such that for all $k \geq 0$ inequalities
\eqref{eq1_teo1}
and \eqref{eq2_teo1} hold.
\end{theorem}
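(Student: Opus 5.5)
The plan is to repeat the proof of Theorem \ref{theo1} verbatim, after replacing the first-step estimate \eqref{ineqfriststep} of Lemma \ref{lemineqcruc1} with the sharper bound
\begin{equation*}
f(\tilde x_{k+1})-f(x_*) \leq \frac{(\kappa-1)^4}{(\kappa^2+6\kappa+1)^2}\,\bigl(f(x_k)-f(x_*)\bigr).
\end{equation*}
Given this inequality, convexity of $f$ and the update $x_{k+1}=(1-\theta)x_k+\theta\tilde x_{k+1}$ give $f(x_{k+1})-f(x_*)\leq \eta^*(f(x_k)-f(x_*))$. Iterating this and using the identity \eqref{pr22}, $f(x_k)-f(x_*)=\frac12\|e_k\|_A^2$, yields \eqref{eq1_teo1} and \eqref{eq2_teo1} exactly as before. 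The iteration $k=0$, where $x_1=\tilde x_1$, is even better and causes no trouble. If $g_k$ and $r_k$ are linearly dependent, Lemma \ref{remterminates} gives $\tilde x_{k+1}=x_*$, so the bound is trivial.

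In the linearly independent case I keep the Krylov argument from the proof of Lemma \ref{lemineqcruc1}. There $\tilde x_{k+1}$ minimizes $f$ over $x_k+\mathrm{Span}(g_k,Ag_k)$, and every point of this affine space has error $P(A)e_k$ with $P$ of degree at most $2$ and $P(0)=1$. Then, by \eqref{anyphave} and \eqref{bsupP},
\begin{equation*}
f(\tilde x_{k+1})-f(x_*)\leq \tfrac12 \max_{\lambda\in[\lambda_1,\lambda_n]}P(\lambda)^2\,\|e_k\|_A^2 .
\end{equation*}
The only change is the choice of $P$. Instead of $\bigl(1-\tfrac{2\lambda}{\lambda_1+\lambda_n}\bigr)^2$, I take the optimal, shifted and normalized, Chebyshev polynomial
\begin{equation*}
P(\lambda)=\frac{T_2\!\left(\frac{\lambda_n+\lambda_1-2\lambda}{\lambda_n-\lambda_1}\right)}{T_2\!\left(\frac{\lambda_n+\lambda_1}{\lambda_n-\lambda_1}\right)},\qquad T_2(z)=2z^2-1 .
\end{equation*}
This polynomial has degree $2$ and satisfies $P(0)=1$. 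The case $\lambda_1=\lambda_n$ is excluded, since then $g_k$ is an eigenvector and the dependent case applies.

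The key computation is the bound on $|P|$. On $[\lambda_1,\lambda_n]$ the argument of $T_2$ lies in $[-1,1]$, so $|T_2|\leq 1$ there. With $z_0=\frac{\kappa+1}{\kappa-1}$, we have $T_2(z_0)=\frac{2(\kappa+1)^2-(\kappa-1)^2}{(\kappa-1)^2}=\frac{\kappa^2+6\kappa+1}{(\kappa-1)^2}$. Hence $|P(\lambda)|\leq \frac{(\kappa-1)^2}{\kappa^2+6\kappa+1}$ on the spectrum, and squaring gives the claimed factor. One can also verify directly that the explicit expression $\frac{(\lambda_n-\lambda_1)^4}{(\lambda_n^2+6\lambda_1\lambda_n+\lambda_1^2)^2}$ agrees with this factor.

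This step is the only real obstacle, and it amounts to an elementary check. The argument is the classical one that this is the two-step rate of conjugate gradients, and nothing else in the proof of Theorem \ref{theo1} needs modification.
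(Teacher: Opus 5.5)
Your proposal is correct and follows essentially the same route as the paper: you replace the polynomial $\bigl(1-\tfrac{2\lambda}{\lambda_1+\lambda_n}\bigr)^2$ in the Krylov argument of Lemma~\ref{lemineqcruc1} by the normalized shifted Chebyshev polynomial built from $T_2$ (your version coincides with the paper's because $T_2$ is even), bound it by $(\kappa-1)^2/(\kappa^2+6\kappa+1)$ on $[\lambda_1,\lambda_n]$, and then rerun the convexity argument of Theorem~\ref{theo1}. Your explicit treatment of the case $\lambda_1=\lambda_n$ and of the iteration $k=0$ (where $x_1=\tilde x_1$) spells out details the paper leaves implicit.
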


Rate $\eta^*$ given by Theorem
\ref{theo2} is better than
 rate $\bar \eta$  given by Theorem
\ref{theo1} 
for $\lambda_1<\lambda_n$ since
in this case we have
$\eta^*<\bar \eta$.\\

\par {\textbf{Proof of Theorem \ref{theo2}.}} It is easy to see
that it suffices to show
that \eqref{ineqfriststep} holds
with $\rho^4$ replaced by $\tilde \rho^4$
where 
$$
\tilde\rho^2 = \frac{(\lambda_n-\lambda_1)^2}{\lambda_n^2+6 \lambda_1 \lambda_n+\lambda_1^2}.
$$
Following the proof of Lemma \ref{lemineqcruc1}, it suffices to consider
the case where $g_k$ and $r_k$
are linearly independent. Let $T_2$ be the Chebyshev
polynomial of degree 2 given
by $T_2(x)=2x^2-1$ and define
the polynomial of degree 2
$$
\displaystyle P(\lambda)=\frac{T_2\left(\frac{2\lambda-(\lambda_n+\lambda_1)}{\lambda_n-\lambda_1}\right)}{T_2\left(-\frac{\lambda_n+\lambda_1}{\lambda_n-\lambda_1}\right)}.
$$
Using the fact that 
for all
$\lambda \in [\lambda_1,\lambda_n]$,
we have 
$$
\frac{2\lambda-(\lambda_n+\lambda_1)}{\lambda_n-\lambda_1} \in [-1,1],
$$
and $\lvert T_2(x) \rvert \leq 1$ for all
$x \in [-1,1]$, we deduce
that  
\begin{equation}\label{ineqp2}
\lvert P(\lambda) \rvert \leq
\tilde \rho^2 =\frac{1}{\left \lvert {T_2\left(-\frac{\lambda_n+\lambda_1}{\lambda_n-\lambda_1}\right)} \right \rvert} = \frac{(\lambda_n-\lambda_1)^2}{\lambda_n^2+6 \lambda_1 \lambda_n+\lambda_1^2}
\end{equation}
for all $\lambda \in [\lambda_1,\lambda_n]$.

Since $P$ is a polynomial of degree 2 with $P(0)=1$, we can reproduce the proof  of Theorem
\ref{theo1} replacing inequality 
\eqref{ineqpoly} by this
inequality \eqref{ineqp2}
to obtain
\begin{equation}\label{crucxtilde2}
f(\tilde x_{k+1})-f(x_*) \leq \frac{1}{2} {\tilde \rho}^4 \|e_k\|_A^2 = 
{\tilde \rho}^4 (f(x_k)-f(x_*)).
\end{equation}
This shows that inequalities
\eqref{eq1_teo1}
and \eqref{eq2_teo1} hold
for every  $\eta \leq \eta^*$.
$\hfill \square$\\

\par We can still improve this rate, as shown in the following theorem.

\begin{theorem}\label{theo3}
Let $x_0 \in \mathbb{R}^n$ be any initial point for RelaxME. Let $0 < \lambda_1 \leq \lambda_2 \leq \dots \leq \lambda_n$ be the eigenvalues of $A$ and let
$\kappa$
be the condition number of $A$. Then, RelaxME converges
linearly to $x_{*}$, the unique minimizer of the quadratic $f$: there is
$
\eta \leq \hat \eta
$
where 
$$
\hat \eta=
(1-\theta)^2 + \theta (2-\theta)\frac{(\kappa-1)^4}{(\kappa^2+6\kappa+1)^2}
$$
such that for all $k \geq 0$ inequalities
\eqref{eq1_teo1}
and \eqref{eq2_teo1} hold.
Moreover, if $\lambda_1<\lambda_n$
then 
we have $\hat \eta<\eta^*$.
\end{theorem}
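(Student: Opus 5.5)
The argument will rest on the exact $A$-orthogonal decomposition in part (C) of the preceding lemma, rather than on the convexity bound used for Theorems \ref{theo1} and \ref{theo2}. Fix an iteration $k$. If $g_k$ and $r_k$ are linearly dependent, Lemma \ref{remterminates} gives $x_{k+1}=x_*$, so the bound is trivial. Otherwise we have
\[
\|e_{k+1}\|_A^2=\|\tilde e_{k+1}\|_A^2+(1-\theta)^2\|d_k\|_A^2 .
\]
By part (B), $\|d_k\|_A^2=\|e_k\|_A^2-\|\tilde e_{k+1}\|_A^2$. Substituting gives
\[
\|e_{k+1}\|_A^2=(1-\theta)^2\|e_k\|_A^2+\bigl(1-(1-\theta)^2\bigr)\|\tilde e_{k+1}\|_A^2=(1-\theta)^2\|e_k\|_A^2+\theta(2-\theta)\|\tilde e_{k+1}\|_A^2 .
\]
The coefficient $\theta(2-\theta)$ is nonnegative for $\theta\in(0,1]$. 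We can therefore insert the Chebyshev bound \eqref{crucxtilde2} from the proof of Theorem \ref{theo2}, in the form $\|\tilde e_{k+1}\|_A^2\le\tilde\rho^4\|e_k\|_A^2$. This yields $\|e_{k+1}\|_A^2\le\hat\eta\,\|e_k\|_A^2$.

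Since $f(x_k)-f(x_*)=\tfrac12\|e_k\|_A^2$ by \eqref{pr22}, this is the one-step contraction $f(x_{k+1})-f(x_*)\le\hat\eta\,(f(x_k)-f(x_*))$. Induction then gives \eqref{eq1_teo1}, and taking square roots gives \eqref{eq2_teo1}.

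One technical point concerns the case $k=0$. There Algorithm \ref{Alg1} sets $x_1=\tilde x_1$, i.e.\ it effectively uses $\theta=1$. The bound $\|e_1\|_A^2\le\tilde\rho^4\|e_0\|_A^2$ still holds, and $\tilde\rho^4\le\hat\eta$ because $\hat\eta$ is a convex combination of $1$ and $\tilde\rho^4$ (the weights $(1-\theta)^2$ and $\theta(2-\theta)$ sum to $1$). So the $k=0$ step is covered. Part (C) relies on the update $x_{k+1}=(1-\theta)x_k+\theta\tilde x_{k+1}$, which holds for $k\ge1$.

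For the comparison $\hat\eta<\eta^*$ when $\lambda_1<\lambda_n$, write $q=\tilde\rho^4\in(0,1)$. Then
\[
\eta^*-\hat\eta=(1-\theta)+\theta q-(1-\theta)^2-\theta(2-\theta)q=(1-\theta)\theta(1-q).
\]
This difference is strictly positive for $\theta\in(0,1)$ and $q<1$, and $q<1$ holds because $(\kappa-1)^2<\kappa^2+6\kappa+1$ when $\kappa>1$. For $\theta=1$ the two rates coincide, so the strict inequality must be read for $\theta\in(0,1)$. I expect this endpoint case to be the only delicate point to flag; the rest is direct substitution.
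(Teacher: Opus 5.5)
Your proof is correct and is essentially the paper's argument: you eliminate $\|d_k\|_A^2$ between parts (B) and (C) to get $\|e_{k+1}\|_A^2=(1-\theta)^2\|e_k\|_A^2+\theta(2-\theta)\|\tilde e_{k+1}\|_A^2$, then insert the Chebyshev bound \eqref{crucxtilde2} and use $f(x_k)-f(x_*)=\tfrac12\|e_k\|_A^2$. Your two extra remarks are accurate and go beyond what the paper's proof checks: the $k=0$ step, where Algorithm \ref{Alg1} effectively uses $\theta=1$, is covered because $\hat\eta$ is a convex combination of $1$ and $\tilde\rho^4$; and since $\eta^*-\hat\eta=\theta(1-\theta)(1-\tilde\rho^4)$, the stated strict inequality $\hat\eta<\eta^*$ fails at $\theta=1$ and holds only for $\theta\in(0,1)$.
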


\par {\textbf{Proof of Theorem \ref{theo3}.}}
We have already seen
that if $g_k$ and
$r_k$ are linearly dependent, then this is the last iteration of the algorithm which has computed in a finite number of iterations the optimal solution. It suffices to consider the case when
for all iterations $k$, 
vectors $g_k$ and $r_k$
are linearly independent.
In this case, it follows from \eqref{ekortpit}
and \eqref{crucekp1} that
\begin{eqnarray}
f(x_{k+1})-f(x_*)
= \frac{1}{2}\|e_{k+1}\|_A^2 & = &
\frac{1}{2}\Big((1-\theta)^2 \|e_k\|_A^2 + 
\theta (2-\theta)\|\tilde e_{k+1}\|_A^2\Big) \nonumber \\
& \stackrel{\eqref{crucxtilde2}}{\leq} &
\frac{1}{2}\Big(\Big[ (1-\theta)^2 + \theta (2-\theta){\tilde \rho}^4 \Big] \|e_{k}\|_A^2 \Big)\nonumber \\
& = &
\Big[ (1-\theta)^2 + \theta (2-\theta){\tilde \rho}^4 \Big] (f(x_k)-f(x_*)). \nonumber
\end{eqnarray}
We have therefore shown that
\eqref{eq1_teo1} and \eqref{eq2_teo1}
also hold with $\bar \eta$
replaced by $\hat \eta$
and if $\lambda_1<\lambda_n$
then $\hat \eta<\eta^*$.
$\hfill \square$

\section{The method of ellipcenters with momentum}
\label{sec:3}

\subsection{Algorithm}

This section introduces a second acceleration strategy for the 
method of
ellipcenters, which incorporates inertial terms. Specifically, we develop a two--step update scheme that first computes an intermediate point, $\tilde x_{k+1}$, using a standard iteration of the method of ellipcenters (Algorithm \ref{Alg1} with $\theta = 1$). This point is then improved by incorporating momentum, defining $x_{k+1}$ as a weighted sum of between $\tilde x_{k+1}$ and $x_{k-1}$
\begin{equation}
x_{k+1} = (1-\mu_k)\tilde x_{k+1} + \mu_k x_{k-1}, \label{eq1_sec3}
\end{equation}
where the weight $\mu_k\in\mathbb{R}$ is optimally determined as follows
\begin{equation}
\mu_k = \textrm{arg}\min_{\mu\in\mathbb{R}} f(\, (1-\mu)\tilde x_{k+1} + \mu x_{k-1}  \,).\label{eq2_sec3}
\end{equation}

The method of ellipcenters with momentum 
is given in Algorithm \ref{Alg2}.

\begin{algorithm}
\caption{Ellipcenter Method with Momentum for Convex Quadratic
Minimization (MomME)}
\label{Alg2}

\begin{algorithmic}[1]

\Require $A \in \mathbb{R}^{n \times n}$ symmetric positive definite,
$b \in \mathbb{R}^{n}$, $x_0 \in \mathbb{R}^{n}$,
and tolerance $\varepsilon > 0$

\State $\widetilde{x}_0 = x_0$
\State $g_0 = \nabla f(x_0)$
\State $k = 0$

\While{$\lVert g_k \rVert > \varepsilon$}

    \State $w_k = A g_k$

    \State $\displaystyle
        t_k =
        \frac{2\lVert g_k\rVert^2}
             {g_k^{\top}w_k}$

    \State $y_k = x_k-t_k g_k$

    \State $r_k = g_k-t_k w_k$

    \If{$g_k$ and $r_k$ are linearly independent}

        \State $\displaystyle
        (\alpha_k,\beta_k)
        =
        \operatorname*{arg\,min}_{(\alpha,\beta)\in\mathbb{R}^2}
        f\bigl(x_k+\alpha g_k+\beta r_k\bigr)$

        \State $\widetilde{x}_{k+1}
        = x_k+\alpha_k g_k+\beta_k r_k$

    \Else

        \State $\displaystyle
        \widetilde{x}_{k+1}
        = \frac{1}{2}(x_k+y_k)$

 \State $x_{k+1} = \widetilde{x}_{k+1}$

        \State $g_{k+1}
        = \nabla f(\widetilde{x}_{k+1})$

        \State {\textbf{break}}

    \EndIf

    \If{$k=0$}

        \State $x_{k+1} = \widetilde{x}_{k+1}$

        \State $g_{k+1}
        = \nabla f(\widetilde{x}_{k+1})$

    \Else

        \State $\widetilde{s}_{k-1}
        = \widetilde{x}_{k+1}-x_{k-1}$

        \State $\widetilde{v}_{k-1}
        = \nabla f(\widetilde{x}_{k+1})-g_{k-1}$

        \State $\displaystyle
        \mu_k
        =
        \frac{
            \nabla f(\widetilde{x}_{k+1})^{\top}
            \widetilde{s}_{k-1}
        }{
            \widetilde{s}_{k-1}^{\top}
            \widetilde{v}_{k-1}
        }$

        \State $x_{k+1}
        =
        (1-\mu_k)\widetilde{x}_{k+1}
        +\mu_k x_{k-1}$

        \State $g_{k+1}
        =
        (1-\mu_k)\nabla f(\widetilde{x}_{k+1})
        +\mu_k g_{k-1}$

    \EndIf

    \State $k = k+1$

\EndWhile

\end{algorithmic}
\end{algorithm}

Same as Algorithm \ref{Alg1}, we have
that $g_k=\nabla f(x_k)$, which can be shown
easily by induction on $k$. Indeed, the relation
holds for $k=0$ and $k=1$. Now assume that 
$g_k=\nabla f(x_k)$ for some $k \geq 1$.
Then 
\begin{eqnarray}
\nabla f(x_{k+1})&= & Ax_{k+1}-b\nonumber\\
&= & A\Big( \mu_k x_{k-1}+(1-\mu_k) {\tilde x}_{k+1}\Big)-b\nonumber\\
&= & (1-\mu_k) \nabla f(\tilde x_{k+1}) + \mu_k \nabla f(x_{k-1})\nonumber\\
&= & (1-\mu_k) \nabla f(\tilde x_{k+1}) + \mu_k g_{k-1}\;\;\mbox{ (by the induction assumption)}\nonumber\\
&= & g_{k+1}\;\;\mbox{(by definition of $g_{k+1}$)}\label{pr212}
\end{eqnarray}
which achieves the proof that $g_k=\nabla f(x_k)$
for all $k \geq 0$ by induction. As in Algorithm 
\ref{Alg1}, this implies
that $r_k=\nabla f(y_k)$.
This shows that, same as in 
Algorithm \ref{Alg1}, at every iteration, given $x_k$, Algorithm \ref{Alg2} computes
$\tilde x_{k+1}$ using an iteration of the method of ellipcenters, as announced before. Next, we check that
the computations
of $\tilde s_{k-1}$,
$\tilde v_{k-1}$, and 
$\mu_k$ amount to choosing
$\mu_k$ that solves problem
\eqref{eq2_sec3}, as announced before.
Indeed, the optimality conditions
for problem \eqref{eq2_sec3} that 
$\mu_k$ solves is
\begin{eqnarray}
0 & = & \langle x_{k-1}-\tilde x_{k+1},
\nabla f( \tilde x_{k+1} + \mu_k(x_{k-1}-{\tilde x}_{k+1})) 
\rangle \nonumber \\
 & = & \langle x_{k-1}-\tilde x_{k+1},
\nabla f( \tilde x_{k+1} )  + \mu_k(\nabla f(x_{k-1})-\nabla f({\tilde x}_{k+1})) \rangle \nonumber\\ 
 & = & \langle x_{k-1}-\tilde x_{k+1},
\nabla f( \tilde x_{k+1} )  + \mu_k(g_{k-1}-\nabla f({\tilde x}_{k+1}))
\rangle, \nonumber 
\end{eqnarray}
which gives
\begin{equation}
\mu_k = \frac{\nabla f(\tilde x_{k+1})^{\top}(\tilde x_{k+1}-x_{k-1})}{(\tilde x_{k+1}-x_{k-1})^{\top}(\nabla f(\tilde x_{k+1})-g_{k-1})}\label{eq3_sec3}
\end{equation}
and which indeed corresponds to the formula used in 
Algorithm \ref{Alg2}
for $\mu_k$. We have therefore justified
all the computations in Algorithm \ref{Alg2}.

Observe also that in the linearly independent case, the auxiliary point $\tilde x_{k+1}$ can be re--written as
\begin{equation}
\tilde x_{k+1} = x_k + (\alpha_k + \beta_k)g_k - \beta_kt_k\,Ag_k. \label{eq4_sec3}
\end{equation}
Combining \eqref{eq4_sec3} with \eqref{eq1_sec3} we obtain
$$ x_{k+1} = x_k + (1-\mu_k)(\alpha_k+\beta_k)g_k - (1-\mu_k)\beta_kt_k\,Ag_k - \mu_ks_{k-1}, $$
where $s_{k-1} = x_k - x_{k-1}$ is the commonly called momentum term. Thus, $x_{k+1}$ belongs to three-dimensional affine space $x_k + \textrm{span}\{ g_k,Ag_k,s_{k-1} \}$. Note that this differs from the conjugate gradient method which computes $x_{k+1}$ as the unique minimizer of $f(\cdot)$ over the bi-dimensional space $x_k + \textrm{span}\{ g_k,s_{k-1} \}$.

\subsection{Convergence of MomME}

By the minimization property \eqref{eq2_sec3}, we directly have
$$f(x_{k+1}) - f(x_{*}) \leq f(\tilde x_{k+1}) - f(x_{*}).$$

This shows that the method of ellipcenters with momentum is at least as good as the method of ellipcenters and therefore inherits the convergence rate of Algorithm \ref{Alg1} with $\theta = 1$. Hence, we can state the following convergence result.

\begin{theorem}\label{theorem2}
Let $x_0 \in \mathbb{R}^n$ be any initial point. Let $0 < \lambda_1 \leq \lambda_2 \leq \dots \leq \lambda_n$ be the eigenvalues of $A$. Then, Algorithm \ref{Alg2} generates a sequence $(x_k) \subset \mathbb{R}^n$ that converges to $x_{*}$, the unique minimizer of the quadratic $f$. Moreover, the convergence is linear and we have $\eta \leq \frac{(\kappa-1)^4}{(\kappa^2+6\kappa+1)^2}$ such that for all $k \geq 0$,
\begin{equation}\label{eq1_teo2}
    f(x_k) - f(x_*) \leq \eta^k (f(x_0) - f(x^*))
\end{equation}
\textit{and}
\begin{equation}\label{eq2_teo2}
    \|x_k - x_*\|_A \leq \sqrt{\eta^{k}} \|x_0 - x_*\|_A.
\end{equation}
\end{theorem}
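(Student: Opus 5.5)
The plan is to reduce Theorem \ref{theorem2} to the one-step estimate \eqref{crucxtilde2} from the proof of Theorem \ref{theo2}, combined with the minimization property \eqref{eq2_sec3}. The first step is to observe that, at every iteration $k$, the point $\tilde x_{k+1}$ computed by Algorithm \ref{Alg2} is exactly one ME step from $x_k$. This holds because $g_k=\nabla f(x_k)$ and $r_k=\nabla f(y_k)$, as shown by the induction \eqref{pr212}. As a result, the proof of Lemma \ref{lemineqcruc1} (Krylov-space argument) and of Theorem \ref{theo2} (Chebyshev polynomial $T_2$) apply verbatim: they only use that $\tilde x_{k+1}$ minimizes $f$ on $x_k+\mbox{Span}(g_k,Ag_k)$. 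This gives
$$
f(\tilde x_{k+1})-f(x_*)\le \tilde\rho^4\,(f(x_k)-f(x_*)),\qquad \tilde\rho^4=\frac{(\kappa-1)^4}{(\kappa^2+6\kappa+1)^2},
$$
in the linearly independent case. In the dependent case, Lemma \ref{remterminates} carries over unchanged (its proof uses only $g_k=\nabla f(x_k)$ and the formula for $t_k$), so $x_{k+1}=\tilde x_{k+1}=x_*$ and the inequality is trivial.

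The second step handles the momentum update. For $k=0$ we have $x_1=\tilde x_1$. For $k\ge 1$, taking $\mu=0$ in \eqref{eq2_sec3} gives $f(x_{k+1})\le f(\tilde x_{k+1})$. We must also check that $\mu_k$ is well defined, i.e.\ that the denominator $\tilde s_{k-1}^\top\tilde v_{k-1}=\|\tilde x_{k+1}-x_{k-1}\|_A^2$ is nonzero. If $\tilde x_{k+1}=x_{k-1}$, the one-dimensional problem is degenerate, and any choice of $\mu$ (say $\mu_k=0$) yields $x_{k+1}=\tilde x_{k+1}$, so the inequality still holds. Chaining the two steps gives $f(x_{k+1})-f(x_*)\le \tilde\rho^4 (f(x_k)-f(x_*))$, and induction yields \eqref{eq1_teo2} with $\eta\le\tilde\rho^4$.

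Finally, \eqref{eq2_teo2} follows from the identity $f(x)-f(x_*)=\frac12\|x-x_*\|_A^2$ established in \eqref{pr22}: divide by $\frac12$ and take square roots. Convergence $x_k\to x_*$ follows since $\tilde\rho^4<1$. The only point that needs care, and it is the main (mild) obstacle, is the well-definedness of $\mu_k$ and the justification that the ME-step estimate is valid for the momentum iterates $x_k$ rather than for RelaxME iterates. That estimate is purely local in $x_k$, so this transfers directly.
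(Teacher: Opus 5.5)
Your proposal is correct and is essentially the paper's argument. The paper also takes $\mu=0$ in \eqref{eq2_sec3} to get $f(x_{k+1})-f(x_*)\le f(\tilde x_{k+1})-f(x_*)$, and then invokes the one-step ME bound \eqref{crucxtilde2}, which, as you make explicit, depends only on $x_k$ and so applies to the MomME iterates. Your extra checks (the linearly dependent branch and the denominator $\|\tilde x_{k+1}-x_{k-1}\|_A^2$ of $\mu_k$) are details the paper leaves implicit. In fact that denominator cannot vanish while $g_k\neq 0$: if $\tilde x_{k+1}=x_{k-1}$, then $f(\tilde x_{k+1})\le f(x_k)\le f(x_{k-1})$ holds with equality, which forces $\tilde x_{k+1}=x_k$ and hence $g_k=0$.
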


\section{Finite convergence 
of ME, RelaxME, and MomME
in one single iteration}\label{sec:finiteconv}

In this section, we prove
Theorem \ref{finiteconv} below
which states 
that ME, RelaxME, and MomME converge in at most one iteration when
the matrix $A$ has either one single eigenvalue or has only two distinct eigenvalues.

\begin{theorem}\label{finiteconv}
Sequences $(x_k)$ generated by 
ME, RelaxME, and MomME converge to the minimizer 
$x_*$ of $f$
in at most one iteration when
the matrix $A$ has either one single eigenvalue or has only two distinct eigenvalues.
\end{theorem}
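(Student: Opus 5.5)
The plan is to prove that $\tilde x_1 = x_*$. At iteration $k=0$ all three methods (ME, RelaxME, MomME) set $x_1=\tilde x_1$: the ``if $k=0$'' branch assigns $x_{k+1}=\tilde x_{k+1}$, and the break branch assigns $x_1=\tilde x_1$ as well. So once $\tilde x_1=x_*$ is established, we get $x_1=x_*$ and $g_1=0$, and every method stops after one iteration. If $x_0=x_*$ already, there is nothing to prove, so we assume $g_0\neq 0$. We then split into the two branches of the algorithms.

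\textbf{Linearly dependent branch.} If $g_0$ and $r_0$ are linearly dependent, Lemma \ref{remterminates} gives $x_1=\tilde x_1=x_*$ directly. This covers in particular the case where $A$ has a single eigenvalue $\lambda$: then $Ag_0=\lambda g_0$, so $r_0=g_0-t_kAg_0$ is a multiple of $g_0$.

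\textbf{Linearly independent branch.} Here $A$ must have exactly two distinct eigenvalues $\lambda_1<\lambda_n$. As in the proof of Lemma \ref{lemineqcruc1}, $\mbox{Span}(g_0,r_0)=\mbox{Span}(g_0,Ag_0)$. Hence $\tilde x_1$ minimizes $f$ over $x_0+\mbox{Span}(g_0,Ag_0)$, and by \eqref{anyphave},
$$f(\tilde x_1)-f(x_*)\leq \tfrac12\|P(A)e_0\|_A^2$$
for every polynomial $P$ of degree at most $2$ with $P(0)=1$. We choose
$$P(\lambda)=\Big(1-\frac{\lambda}{\lambda_1}\Big)\Big(1-\frac{\lambda}{\lambda_n}\Big),$$
which vanishes at both eigenvalues of $A$. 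Expanding $e_0=\sum_i a_iv_i$ in an orthonormal eigenbasis and using \eqref{bsupP}, we get $\|P(A)e_0\|_A^2=\sum_i\lambda_ia_i^2P(\lambda_i)^2=0$. Therefore $f(\tilde x_1)=f(x_*)$, and uniqueness of the minimizer of the strongly convex $f$ gives $\tilde x_1=x_*$. Equivalently, the rate in Theorem \ref{theo2} with the interval $[\lambda_1,\lambda_n]$ replaced by the spectrum $\{\lambda_1,\lambda_n\}$ is zero.

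\textbf{Main point of care.} The only delicate step is checking that each algorithm indeed returns $x_1=\tilde x_1$, i.e., that neither the relaxation step nor the momentum step is applied at $k=0$. The momentum formula would need $x_{-1}$, and the relaxation would move the iterate away from $\tilde x_1$. Both are excluded by the ``if $k=0$'' lines of Algorithms \ref{Alg1} and \ref{Alg2}, and ME is simply the case $\theta=1$. A minor additional check is that the minimization over the Krylov space is well defined, which holds because $f$ is strictly convex.
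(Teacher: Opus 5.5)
Your proof is correct and is essentially the paper's argument. Your annihilating polynomial $P(\lambda)=(1-\lambda/\lambda_1)(1-\lambda/\lambda_n)$ is the identity \eqref{specth1} that the paper uses to write $e_0=A^{-1}g_0$ as a combination of $g_0$ and $Ag_0$, so that $x_*\in x_0+\mbox{span}(g_0,r_0)$. The only difference is organizational: you split on the algorithm's branch and invoke \eqref{anyphave}, rather than splitting on the eigen-decomposition of $g_0$, and this lets you skip the paper's separate check that $g_0$ and $Ag_0$ are independent when both eigencomponents of $g_0$ are nonzero.
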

\begin{proof} 
First, observe that all first iterations of the three algorithms
ME, RelaxME, and MomME are the same.
If $g_0=0$ then
$x_0=x_*$ and no iteration is performed. Assume now that 
$g_0 \neq 0$. 
By
\eqref{defeketk},
\begin{equation}\label{g0ae0}
g_0=Ax_0-b=Ae_0 \mbox{ or equivalently }
e_0=A^{-1}g_0.
\end{equation}
Moreover, 
$r_0=g_0-t_0 A g_0$
for $t_0>0$ which gives
\begin{equation}\label{spgrq}
\mbox{span}(g_0,r_0)=
\mbox{span}(g_0,Ag_0).
\end{equation}

We now consider two cases: the case when $A$ has a single eigenvalue $\lambda>0$ and the case when
$A$ has only two distinct eigenvalues $\lambda_1, \lambda_2>0$.

\par {\textbf{Case when $A$ has a single eigenvalue $\lambda>0$.}} In this case, $A=\lambda I$,  
$\mathbb{R}^n=\mbox{Ker}(A-\lambda I)$
and $A g_0 = \lambda g_0$.
By \eqref{g0ae0}, we then have 
$e_0=(1/\lambda)g_0$ which implies 
$$
t_0=\frac{2\|g_0\|^2}{g_0^T A g_0}= \frac{2}{\lambda}
$$
and it follows that 
$$
r_0=g_0-\frac{2}{\lambda}A g_0=-g_0. 
$$
We are therefore in the situation where
$r_0$ and $g_0$ are linearly dependent
and we can follow the proof
of Lemma \ref{remterminates}
to show that $\tilde x_1=x_1$. Indeed, using $y_0=x_0-t_0g_0$, we have 
\begin{eqnarray}
\tilde x_1 & = & \frac{1}{2}(x_0+y_0) =  x_0 - \frac{t_0}{2}g_0
 =  x_0 - \frac{1}{\lambda}g_0\nonumber\\
& = & x_0 -e_0=x_*. 
\end{eqnarray}
Since at the first iteration, the algorithm sets $x_1=\tilde x_1$, it outputs in one iteration the optimal solution $x_*$.\\

\par {\textbf{Case when $A$ has only two distinct eigenvalues $\lambda_1>0$
and $\lambda_2>0$.}} Since $A$ is symmetric, by the spectral theorem,
there is an orthogonal matrix $U=[u_1,u_2,\ldots,u_n]$ whose columns
$u_1,u_2,\ldots,u_n$,
are 
eigenvectors of $A$ and a diagonal matrix $\Lambda$ whose positive entries in the diagonal are the eigenvalues of $A$ such that $A=U \Lambda U^T$ with
$U U^T=I$. Then 
\begin{equation}\label{specth}
(A-\lambda_1 I)(A-\lambda_2 I)=
U (\Lambda-\lambda_1 I) U^T U (\Lambda-\lambda_2 I) U^T =U (\Lambda-\lambda_1 I)(\Lambda-\lambda_2 I) U^T. 
\end{equation}
It is easy to check that 
$(\Lambda-\lambda_1 I)(\Lambda-\lambda_2 I)=0$, which, combined with 
\eqref{specth}, gives
\begin{equation}\label{specth1}
(A-\lambda_1 I)(A-\lambda_2 I)=0. 
\end{equation}
Relation \eqref{specth1} can be written
equivalently 
$$
A^2-(\lambda_1+\lambda_2)A+\lambda_1 \lambda_2 I =0
$$
which gives
\begin{equation}\label{specth2}
A^{-1}=\frac{\lambda_1 + \lambda_2}{\lambda_1 \lambda_2}I-\frac{1}{\lambda_1 \lambda_2}A. 
\end{equation}
Using \eqref{specth2}, we obtain
\begin{equation}\label{specth3}
e_0 = A^{-1} g_0 =\frac{\lambda_1 + \lambda_2}{\lambda_1 \lambda_2}g_0-\frac{1}{\lambda_1 \lambda_2}Ag_0. 
\end{equation}
Therefore, 
\begin{equation}\label{specth4}
e_0  \in \mbox{span}(e_0,Ag_0)=
\mbox{span}(g_0,r_0).
\end{equation}
It follows that 
$$
x_*=x_0-e_0 \in x_0 + \mbox{span}(g_0,r_0).
$$
Using the relation
$$
\mathbb{R}^n=\mbox{Ker}(A-\lambda_1 I) \oplus \mbox{Ker}(A-\lambda_2 I),
$$
we obtain 
$$
g_0= g_0^{(1)} + g_0^{(2)} 
$$
for $g_0^{(1)},$ $g_0^{(2)}$ orthogonal
with $g_0^{(1)} \in \mbox{Ker}(A-\lambda_1 I)$
and $g_0^{(2)} \in \mbox{Ker}(A-\lambda_2 I)$.

If $g_0^{(1)}$ or $g_0^{(2)}$ is null, we 
can reproduce the computations of the
first case where $A$ has a single eigenvalue
to show that $\tilde x_1=x_*$ and the algorithm
terminates in one iteration.
Indeed, in this case, we can assume without loss of generality that 
$g_0^{(2)}=0$
and
$g_0=g_0^{(1)}$
which gives
$A g_0=\lambda_1 g_0$.
By \eqref{g0ae0}, we then have 
$e_0=(1/\lambda_1)g_0$ which implies 
$$
t_0=\frac{2\|g_0\|^2}{g_0^T A g_0}= \frac{2}{\lambda_1}
$$
and it follows that 
$$
r_0=g_0-\frac{2}{\lambda_1}A g_0=-g_0. 
$$
We are therefore in the situation where
$r_0$ and $g_0$ are linearly dependent
and we can follow the proof
of Lemma \ref{remterminates}
to show that $x_1=\tilde x_1=x_1$.

Otherwise, if both $g_0^{(1)}$ and $g_0^{(2)}$
are nonnull, then $g_0$ and $Ag_0$
are linearly independent. To see this, 
we compute $Ag_0 = Ag_0^{(1)} + Ag_0^{(2)}=
\lambda_1 g_0^{(1)} + \lambda_2 g_0^{(2)}$.
By contradiction, if $A g_0 = \alpha g_0$ then
$$
(\lambda_1-\alpha)g_0^{(1)} + (\lambda_2-\alpha)g_0^{(2)}=0. 
$$
Since vectors $g_0^{(1)}$ and $g_0^{(2)}$
are orthogonal,  they are linearly independent
which implies $\alpha=\lambda_1=\lambda_2$,
a contradiction. 
We have therefore shown that $g_0$ and $Ag_0$
are linearly independent which implies, by \eqref{spgrq},
that $g_0$ and $r_0$ are linearly independent.
In this case, the algorithm computes the minimizer $\tilde x_1$ of 
$f$ in the affine space
$x_0+\mbox{span}(g_0,r_0)$.
Since we have shown
that $x_*$, the unique global minimizer of $f$, belongs to this affine space, we have
that $\tilde x_1=x_*$,
$x_1=\tilde x_1=x_*$
and again the algorithm terminates in one iteration.

\end{proof}

\section{Numerical Experiments}\label{sec:num}

In this section, we illustrate the effectiveness of our proposed algorithms on a variety of convex quadratic problems. We implemented RelaxME and MomME in Julia setting $\theta = 0.9$ in all experiments. In order to compare our proposal with other methodologies existing in the literature, we include comparisons with the following methods: the conjugate gradient method (CG) \cite{hestenes1952methods}, Barzilai and Borwein gradient method with long step (BB1) \cite{barzilai1988two}, the gradient method with adaptive spectral step length (ABBmin1) with $\tau=0.8$ and $m = 9$, proposed in \cite{Frassoldati2008} and the original method of ellipcenters (ME) \cite{behling2026method}.  All experiments were performed with an AMD Ryzen 5 2600 processor, with access to 24GB of RAM and 3.4GHz.
The Julia code of all 
implementations corresponding to these experiments can be found on
github at 
{\url{https://github.com/vguigues/MomME}}.\\

\subsection{Experiment 1: random convex quadratic problems}
Our first experiment assesses the performance of the algorithms when applied to the large-scale dense problems considered in \cite{Friedlander1999}. In particular, the matrix $A$ is randomly generated as follows: $A = PDP^{\top}\in\mathbb{R}^{n \times n}$ where $P$ is the product of three Householder matrices,
$$ P = (I - 2v_1v_1^{\top})(I - 2v_2v_2^{\top})(I - 2v_3v_3^{\top}) $$
where $v_1$, $v_2$ and $v_3$ are three random vectors belonging to the unitary sphere, and $$D = \textrm{diag}(d_1,d_2,\ldots,d_n)$$ is a diagonal matrix whose $i$--th diagonal entry is given by
$$ d_i = \textrm{exp}\left(\frac{i-1}{n-1}\mbox{ncond}\right), \quad \forall i\in\{1,2,\ldots, n\}, $$
where \textrm{ncond} is a positive integer. Note that $A$ becomes increasingly ill--conditioned as \textrm{ncond} increases. The vector $b\in\mathbb{R}^{n}$ is assembled as $b=Ax_{*}$, where $x_{*} = 2\textrm{rand}(n,1)-\textrm{ones}(n,1)$ using Matlab  notation and the initial point is the null vector. We let all algorithms run up to $K=50000$ iterations and stop them at iteration $k<K$ if $\|g_k\|<1e$-$7$. In this experiment, we consider three values of $n$ ($n \in \{1000, 2500, 5000\}$) and four values of $\text{ncond}$ ($\text{ncond} \in \{3, 6, 9, 12\}$). For each pair $(n, \text{ncond})$, we evaluate the algorithms across five independent problems. We report the average number of iterations (Iter), CPU time in seconds (Time), and final gradient norm (NrmG) for all methods. Table \ref{tab:1} collects the numerical results associated with this experiment. It can be seen from this table
that our method MomME requires much less iterations
that all competing optimizers on all instances.
For 
$n=2500$ and
ncond=$6$, CG, MomME, and
ABBmin1 are the quickest
optimizers and provide close mean CPU times.
For the remaining 11 combinations of the pair
$(n,\mbox{ncond})$, CG and MomME are the quickest and provide
very close mean CPU times.

\begin{table}
\centering
\caption{Numerical results corresponding to Experiment 1.}
\label{tab:1}
\begin{tabular}{c c c c c c c}
\hline		
Method & Iter & Time & NrmG & Iter & Time & NrmG  \\
\hline
& \multicolumn{3}{c}{ $(n,\emph{ncond}) = (1000,3)$ } & \multicolumn{3}{c}{ $(n,\emph{ncond}) = (1000,6)$ } \\
\hline
ME	& 48.7 & 0.0254 & $<$e-16 & 845.7 & 0.4172 & $<$e-16 \\
RelaxME	& 34.0 & 0.0154 & $<$e-16 & 184.0 & 0.0701 & $<$e-16 \\
CG	& 45.3 & 0.0103 & $<$e-16 & 206.3 & 0.0371 & $<$e-16 \\
BB1	& 62.7 & 0.0163 & $<$e-16 & 259.0 & 0.0581 & $<$e-16 \\
ABBmin1	& 58.0 & 0.0119 & $<$e-16 & 250.0 & 0.0424 & $<$e-16 \\
MomME	& 23.0 & 0.0111 & $<$e-16 & 105.7 & 0.0407 & $<$e-16 \\
\hline
& \multicolumn{3}{c}{ $(n,\emph{ncond}) = (1000,9)$ } & \multicolumn{3}{c}{ $(n,\emph{ncond}) = (1000,12)$ } \\
\hline
ME	& 16712.3 & 9.7304 & $<$e-16 & 329493.7 & 140.8681 & $<$e-16 \\
RelaxME	& 874.0 & 0.5789 & $<$e-16 & 3901.7 & 1.5546 & $<$e-16 \\
CG	& 887.0 & 0.2324 & $<$e-16 & 3686.3 & 0.4768 & $<$e-16 \\
BB1	& 1448.7 & 0.3648 & $<$e-16 & 8498.3 & 1.7435 & $<$e-16 \\
ABBmin1	& 1141.7 & 0.3426 & $<$e-16 & 5015.3 & 0.6843 & $<$e-16 \\
MomME	& 484.7 & 0.1953 & $<$e-16 & 2254.7 & 0.6829 & $<$e-16 \\
\hline
& \multicolumn{3}{c}{ $(n,\emph{ncond}) = (2500,3)$ } & \multicolumn{3}{c}{ $(n,\emph{ncond}) = (2500,6)$ } \\
\hline
ME	& 49.0 & 0.5899 & $<$e-16 & 871.7 & 10.4034 & $<$e-16 \\
RelaxME	& 35.3 & 0.2910 & $<$e-16 & 165.0 & 1.4051 & $<$e-16 \\
CG	& 46.0 & 0.1825 & $<$e-16 & 215.3 & 0.9570 & $<$e-16 \\
BB1	& 56.7 & 0.4171 & $<$e-16 & 302.0 & 2.1332 & $<$e-16 \\
ABBmin1	& 59.0 & 0.2318 & $<$e-16 & 246.0 & 0.9561 & $<$e-16 \\
MomME	& 23.0 & 0.1830 & $<$e-16 & 108.0 & 1.0381 & $<$e-16 \\
\hline
& \multicolumn{3}{c}{ $(n,\emph{ncond}) = (2500,9)$ } & \multicolumn{3}{c}{ $(n,\emph{ncond}) = (2500,12)$ } \\
\hline
ME	& 17013.7 & 207.8736 & $<$e-16 & 341493.0 & 3445.9426 & $<$e-16 \\
RelaxME	& 925.7 & 8.5134 & $<$e-16 & 4318.7 & 29.9695 & $<$e-16 \\
CG	& 964.3 & 4.3331 & $<$e-16 & 4270.7 & 14.4453 & $<$e-16 \\
BB1	& 1597.7 & 12.8237 & $<$e-16 & 9026.0 & 58.8927 & $<$e-16 \\
ABBmin1	& 1117.7 & 4.7207 & $<$e-16 & 5081.7 & 19.0859 & $<$e-16 \\
MomME	& 496.7 & 4.4994 & $<$e-16 & 2281.7 & 15.9311 & $<$e-16 \\
\hline
& \multicolumn{3}{c}{ $(n,\emph{ncond}) = (5000,3)$ } & \multicolumn{3}{c}{ $(n,\emph{ncond}) = (5000,6)$ } \\
\hline
ME	& 50.0 & 1.7424 & $<$e-16 & 884.3 & 31.0129 & $<$e-16 \\
RelaxME	& 33.7 & 0.7976 & $<$e-16 & 179.7 & 4.2374 & $<$e-16 \\
CG	& 47.0 & 0.5710 & $<$e-16 & 220.0 & 2.7046 & $<$e-16 \\
BB1	& 59.0 & 1.3261 & $<$e-16 & 285.0 & 6.4330 & $<$e-16 \\
ABBmin1	& 58.7 & 0.6993 & $<$e-16 & 250.7 & 2.9492 & $<$e-16 \\
MomME	& 23.0 & 0.5638 & $<$e-16 & 109.3 & 2.7710 & $<$e-16 \\
\hline
& \multicolumn{3}{c}{ $(n,\emph{ncond}) = (5000,9)$ } & \multicolumn{3}{c}{ $(n,\emph{ncond}) = (5000,12)$ } \\
\hline
ME	& 17370.3 & 592.4319 & $<$e-16 & 349393.7 & 13728.9943 & $<$e-16 \\
RelaxME	& 795.0 & 18.5366 & $<$e-16 & 4193.7 & 124.5071 & $<$e-16 \\
CG	& 1000.3 & 11.3145 & $<$e-16 & 4510.3 & 65.1129 & $<$e-16 \\
BB1	& 1275.7 & 28.5448 & $<$e-16 & 7518.0 & 202.3553 & $<$e-16 \\
ABBmin1	& 1181.3 & 13.6488 & $<$e-16 & 5369.3 & 74.6858 & $<$e-16 \\
MomME	& 505.0 & 11.4702 & $<$e-16 & 2321.3 & 66.0011 & $<$e-16 \\
\hline
\end{tabular}
\end{table}

\subsection{Experiment 2: two point boundary value problems}
In the second group of experiments, we evaluate the performance of the considered procedures using a practical test problem. Specifically, the linear system involves a tridiagonal matrix $A = [a_{ij}] \in \mathbb{R}^{n \times n}$ defined by
$$a_{i,i} = \frac{2}{h^2}, \quad a_{i,i-1} = -\frac{1}{h^2} \,\, (i \neq 1), \quad a_{i,i+1} = -\frac{1}{h^2} \,\, (i \neq n),$$
for all $i \in \{1, 2, \dots, n\}$, where $h = 11/n$. This class of linear systems frequently arises when numerically solving two--point boundary--value problems, see \cite{Meyer,Oviedo2020}. For this experiment, the initial point $x_0$ is set to the null vector in $\mathbb{R}^n$ and the 
entries of vector $b$ are randomly generated in the interval $[-1, 1]$. For the stopping criterion, we used a maximum number of iterations of $K = 50000$ and a tolerance for the gradient norm of $\epsilon = 1e$-$4$. We test all the algorithms across different dimensions ($n = 100, 200, 300, 400, 500, 600$). Table \ref{tab:2} summarizes the average number of iterations, average CPU time (in seconds), and the average final gradient norm computed over five independent runs for each dimension. 
On this experiment, all optimizers are quick
but CG and our method MomME provide the smallest number
of iterations and are the two
quickest on 5 of the 6 instances.

\begin{table}
\centering
\caption{Numerical results corresponding to Experiment 2.}
\label{tab:2}
\begin{tabular}{c c c c c c c}
\hline		
Method & Iter & Time & NrmG & Iter & Time & NrmG  \\
\hline
& \multicolumn{3}{c}{ $n=100$ } & \multicolumn{3}{c}{ $n=200$ } \\
\hline
ME	& 7907.6 & 0.0067 & $<$e-16 & 32116.2 & 0.0463 & $<$e-16 \\
RelaxME	& 608.6 & 0.0097 & $<$e-16 & 1086.4 & 0.0810 & $<$e-16 \\
CG	& 100.0 & 0.0002 & $<$e-16 & 200.0 & 0.0007 & $<$e-16 \\
BB1	& 861.4 & 0.0013 & $<$e-16 & 1997.8 & 0.0057 & $<$e-16 \\
ABBmin1	& 464.2 & 0.0006 & $<$e-16 & 964.8 & 0.0017 & $<$e-16 \\
MomME	& 117.4 & 0.0006 & $<$e-16 & 209.6 & 0.0024 & $<$e-16 \\
\hline
& \multicolumn{3}{c}{ $n=300$ } & \multicolumn{3}{c}{ $n=400$ } \\
\hline
ME	& 70926.4 & 0.1603 & $<$e-16 & 122748.8 & 0.3327 & $<$e-16 \\
RelaxME	& 1670.2 & 0.1233 & $<$e-16 & 2063.2 & 0.1052 & $<$e-16 \\
CG	& 300.0 & 0.0017 & $<$e-16 & 400.0 & 0.0030 & $<$e-16 \\
BB1	& 2643.4 & 0.0160 & $<$e-16 & 3826.8 & 0.0233 & $<$e-16 \\
ABBmin1	& 1377.4 & 0.0077 & $<$e-16 & 1774.8 & 0.0160 & $<$e-16 \\
MomME	& 306.0 & 0.0066 & $<$e-16 & 402.8 & 0.0086 & $<$e-16 \\
\hline
& \multicolumn{3}{c}{ $n=500$ } & \multicolumn{3}{c}{ $n=600$ } \\
\hline
ME	& 187710.6 & 0.6338 & $<$e-16 & 278392.8 & 1.0913 & $<$e-16 \\
RelaxME	& 2333.0 & 0.0529 & $<$e-16 & 3433.0 & 0.0835 & $<$e-16 \\
CG	& 500.0 & 0.0038 & $<$e-16 & 600.0 & 0.0056 & $<$e-16 \\
BB1	& 4593.4 & 0.0298 & $<$e-16 & 6490.4 & 0.0474 & $<$e-16 \\
ABBmin1	& 2304.8 & 0.1006 & $<$e-16 & 2942.0 & 0.0951 & $<$e-16 \\
MomME	& 507.0 & 0.0139 & $<$e-16 & 609.0 & 0.0165 & $<$e-16 \\
\hline
\end{tabular}
\end{table}

\subsection{Experiment 3: sparse quadratic problems with real--data}
For the third experiment, we evaluated 28 strictly convex quadratic test functions using large--scale sparse matrices $A \in \mathbb{R}^{n \times n}$ ($n \geq 1000$) taken from the UF Sparse Matrix Collection \cite{Davis}\footnote{The matrices used, belonging to the UF Sparse Matrix Collection, can be downloaded from \url{https://sparse.tamu.edu/}}. The vector $b$ was generated via $b = Ax_*$, assuming that the unique solution is given by $x_* = (1, 2, 3, \dots, n)^\top$, while the initial point was selected as $x_0 = (1, -1, 1, -1, \ldots, 1)^\top$, in other words, the $i$--th entry of the vector $x_0$ is equal to $-1$ if $i$ is even and is $1$ otherwise. The algorithms were executed for up to $K = 50,000$ iterations and stopped at iteration $k \leq K$ if the relative residual satisfied $\|\nabla f(x_k)\|_2 < \epsilon $ with $\epsilon = 10^{-7}$. Comparative results are detailed in Tables \ref{tab:31}, \ref{tab:32},  and \ref{tab:33}. In these tables, ``NrmG'' denotes the last gradient norm, ``'Iter' denotes the number of iterations carried out by the method and ``Time'' denotes the total computational time in seconds. Remarkably, on two
instances, ME, RelaxME, and MomME only require
1 iteration. Among the 28
optimization problems, MomME was
not competitive only for the two
instances mhd3200b and mhd4800b
but for instance mhd3200b RelaxME
and CG are the quickest, providing similar CPU times while for instance mhd4800b RelaxME
is by far the quickest method. 
On most of the remaining 26 instances, MomME provides the smallest
number of iterations and MomME and CG provide
the smallest CPU times. 

\begin{table}
\centering
\caption{Numerical results corresponding to Experiment 3.}
\label{tab:31}
\begin{tabular}{c c c c c c c}
\hline		
Method & Iter & Time & NrmG & Iter & Time & NrmG  \\
\hline
& \multicolumn{3}{c}{ bcsstm02 } & \multicolumn{3}{c}{ bcsstm05 } \\
\hline
ME	& 19 & 5.9300e-5 & $<$e-16 & 27 & 0.0003170 & $<$e-16 \\
RelaxME	& 14 & 5.2200e-5 & $<$e-16 & 20 & 0.0002062 & $<$e-16 \\
CG	& 12 & 1.7600e-5 & $<$e-16 & 18 & 8.9700e-5 & $<$e-16 \\
BB1	& 26 & 5.1200e-5 & $<$e-16 & 32 & 0.0002476 & $<$e-16 \\
ABBmin1	& 23 & 2.8100e-5 & $<$e-16 & 35 & 0.0001560 & $<$e-16 \\
MomME	& 9 & 3.1000e-5 & $<$e-16 & 13 & 0.0001420 & $<$e-16 \\
\hline
& \multicolumn{3}{c}{ bcsstm07 } & \multicolumn{3}{c}{ bcsstm09 } \\
\hline
ME	& 12524 & 1.0564755 & $<$e-16 & 1 & 0.0003488 & $<$e-16 \\
RelaxME	& 596 & 0.0438500 & $<$e-16 & 1 & 0.0002017 & $<$e-16 \\
CG	& 369 & 0.0124541 & $<$e-16 & 2 & 0.0004870 & $<$e-16 \\
BB1	& 1808 & 0.0936485 & $<$e-16 & 4 & 0.0009644 & $<$e-16 \\
ABBmin1	& 899 & 0.0256824 & $<$e-16 & 5 & 0.0007340 & $<$e-16 \\
MomME	& 284 & 0.0210263 & $<$e-16 & 1 & 0.0002030 & $<$e-16 \\
\hline
& \multicolumn{3}{c}{ bcsstm21 } & \multicolumn{3}{c}{ bcsstm22 } \\
\hline
ME	& 5 & 0.0625872 & $<$e-16 & 563 & 0.0050422 & $<$e-16 \\
RelaxME	& 6 & 0.0540492 & $<$e-16 & 72 & 0.0006352 & $<$e-16 \\
CG	& 3 & 0.0179260 & $<$e-16 & 43 & 0.0001700 & $<$e-16 \\
BB1	& 16 & 0.1357834 & $<$e-16 & 122 & 0.0007351 & $<$e-16 \\
ABBmin1	& 12 & 0.0643630 & $<$e-16 & 121 & 0.0004375 & $<$e-16 \\
MomME	& 2 & 0.0180344 & $<$e-16 & 35 & 0.0752466 & $<$e-16 \\
\hline
& \multicolumn{3}{c}{ bibd\_81\_2 } & \multicolumn{3}{c}{ fv1 } \\
\hline
ME	& 1 & 0.0074366 & $<$e-16 & 26 & 2.1586330 & $<$e-16 \\
RelaxME	& 1 & 0.0037551 & $<$e-16 & 20 & 1.0995248 & $<$e-16 \\
CG	& 1 & 0.0073586 & $<$e-16 & 29 & 0.8554080 & $<$e-16 \\
BB1	& 2 & 0.0142210 & $<$e-16 & 35 & 2.0301761 & $<$e-16 \\
ABBmin1	& 2 & 0.0073830 & $<$e-16 & 33 & 0.9284978 & $<$e-16 \\
MomME	& 1 & 0.0038024 & $<$e-16 & 14 & 0.8209626 & $<$e-16 \\
\hline
& \multicolumn{3}{c}{ LF10 } & \multicolumn{3}{c}{ lund\_b } \\
\hline
ME	& 500000 & 0.2910678 & 1.0700e-1 & 52184 & 0.5145678 & $<$e-16 \\
RelaxME	& 11119 & 0.0139615 & $<$e-16 & 1354 & 0.0141224 & $<$e-16 \\
CG	& 47 & 2.2800e-5 & $<$e-16 & 414 & 0.0048636 & $<$e-16 \\
BB1	& 56469 & 0.0345160 & $<$e-16 & 3399 & 0.0237002 & $<$e-16 \\
ABBmin1	& 2911 & 0.0015184 & $<$e-16 & 1730 & 0.0068376 & $<$e-16 \\
MomME	& 576 & 0.0011270 & $<$e-16 & 455 & 0.0049819 & $<$e-16 \\
\hline
\end{tabular}
\end{table}

\begin{table}
\centering
\caption{Numerical results corresponding to Experiment 3.}
\label{tab:32}
\begin{tabular}{c c c c c c c}
\hline		
Method & Iter & Time & NrmG & Iter & Time & NrmG  \\
\hline
& \multicolumn{3}{c}{ mesh1e1 } & \multicolumn{3}{c}{ mesh1em1 } \\
\hline
ME	& 15 & 3.6400e-5 & $<$e-16 & 45 & 8.1800e-5 & $<$e-16 \\
RelaxME	& 15 & 4.3500e-5 & $<$e-16 & 29 & 6.1800e-5 & $<$e-16 \\
CG	& 20 & 2.0200e-5 & $<$e-16 & 32 & 3.1800e-5 & $<$e-16 \\
BB1	& 25 & 3.3000e-5 & $<$e-16 & 51 & 6.3200e-5 & $<$e-16 \\
ABBmin1	& 24 & 2.0600e-5 & $<$e-16 & 52 & 4.0200e-5 & $<$e-16 \\
MomME	& 10 & 3.5600e-5 & $<$e-16 & 20 & 5.0100e-5 & $<$e-16 \\
\hline
& \multicolumn{3}{c}{ mesh1em6 } & \multicolumn{3}{c}{ mesh3em5 } \\
\hline
ME	& 17 & 3.4600e-5 & $<$e-16 & 14 & 0.0005317 & $<$e-16 \\
RelaxME	& 16 & 4.2200e-5 & $<$e-16 & 14 & 0.0007271 & $<$e-16 \\
CG	& 20 & 1.6600e-5 & $<$e-16 & 18 & 0.0004160 & $<$e-16 \\
BB1	& 27 & 2.9900e-5 & $<$e-16 & 24 & 0.0007407 & $<$e-16 \\
ABBmin1	& 26 & 1.9400e-5 & $<$e-16 & 23 & 0.0004774 & $<$e-16 \\
MomME	& 11 & 2.4400e-5 & $<$e-16 & 10 & 0.0005913 & $<$e-16 \\
\hline
& \multicolumn{3}{c}{ mhd3200b } & \multicolumn{3}{c}{ mhd4800b } \\
\hline
ME	& 500000 & 5545.1650430 & $<$e-16 & 500000 & 11668.4923408 & $<$e-16 \\
RelaxME	& 2764 & 20.4574440 & $<$e-16 & 971 & 15.9464792 & $<$e-16 \\
CG	& 5198 & 20.1367672 & $<$e-16 & 6743 & 55.1318183 & $<$e-16 \\
BB1	& 13327 & 92.9306453 & $<$e-16 & 13745 & 212.1669788 & $<$e-16 \\
ABBmin1	& 9094 & 34.0988301 & $<$e-16 & 9816 & 78.6274421 & $<$e-16 \\
MomME	& 17874 & 134.4756059 & $<$e-16 & 22479 & 357.5722142 & $<$e-16 \\
\hline
& \multicolumn{3}{c}{ nos4 } & \multicolumn{3}{c}{ plat362 } \\
\hline
ME	& 1887 & 0.0106610 & $<$e-16 & 500000 & 33.1807635 & $<$e-16 \\
RelaxME	& 154 & 0.0014792 & $<$e-16 & 1178 & 0.0740119 & $<$e-16 \\
CG	& 79 & 0.0002374 & $<$e-16 & 612 & 0.0171572 & $<$e-16 \\
BB1	& 527 & 0.0018996 & $<$e-16 & 7680 & 0.3378808 & $<$e-16 \\
ABBmin1	& 161 & 0.0003997 & $<$e-16 & 4545 & 0.1108690 & $<$e-16 \\
MomME	& 64 & 0.0004834 & $<$e-16 & 1848 & 0.1181252 & $<$e-16 \\
\hline
\end{tabular}
\end{table}

\begin{table}
\centering
\caption{Numerical results corresponding to Experiment 3.}
\label{tab:33}
\begin{tabular}{c c c c c c c}
\hline		
Method & Iter & Time & NrmG & Iter & Time & NrmG  \\
\hline
& \multicolumn{3}{c}{ t2dal\_e } & \multicolumn{3}{c}{ Trefethen\_20 } \\
\hline
ME	& 284 & 9.8335001 & $<$e-16 & 136 & 0.0001072 & $<$e-16 \\
RelaxME	& 33 & 0.7411674 & $<$e-16 & 48 & 7.3500e-5 & $<$e-16 \\
CG	& 93 & 1.0684064 & $<$e-16 & 20 & 1.3800e-5 & $<$e-16 \\
BB1	& 84 & 1.8225858 & $<$e-16 & 88 & 5.3200e-5 & $<$e-16 \\
ABBmin1	& 72 & 0.8267886 & $<$e-16 & 75 & 2.9800e-5 & $<$e-16 \\
MomME	& 35 & 0.8094578 & $<$e-16 & 25 & 5.0900e-5 & $<$e-16 \\
\hline
& \multicolumn{3}{c}{ Trefethen\_20b } & \multicolumn{3}{c}{ Trefethen\_150 } \\
\hline
ME	& 71 & 7.1400e-5 & $<$e-16 & 1600 & 0.0173730 & $<$e-16 \\
RelaxME	& 32 & 6.2400e-5 & $<$e-16 & 227 & 0.0024992 & $<$e-16 \\
CG	& 19 & 9.9000e-6 & $<$e-16 & 114 & 0.0005268 & $<$e-16 \\
BB1	& 67 & 4.7200e-5 & $<$e-16 & 403 & 0.0032595 & $<$e-16 \\
ABBmin1	& 60 & 2.7600e-5 & $<$e-16 & 294 & 0.0012258 & $<$e-16 \\
MomME	& 20 & 3.2600e-5 & $<$e-16 & 82 & 0.0009541 & $<$e-16 \\
\hline
& \multicolumn{3}{c}{ Trefethen\_300 } & \multicolumn{3}{c}{ Trefethen\_500 } \\
\hline
ME	& 3674 & 0.1375534 & $<$e-16 & 6598 & 0.7816970 & $<$e-16 \\
RelaxME	& 386 & 0.0159382 & $<$e-16 & 421 & 0.0402854 & $<$e-16 \\
CG	& 178 & 0.0029016 & $<$e-16 & 242 & 0.0116076 & $<$e-16 \\
BB1	& 450 & 0.0128855 & $<$e-16 & 765 & 0.0508177 & $<$e-16 \\
ABBmin1	& 411 & 0.0060859 & $<$e-16 & 498 & 0.0183909 & $<$e-16 \\
MomME	& 119 & 0.0049866 & $<$e-16 & 155 & 0.0138524 & $<$e-16 \\
\hline
& \multicolumn{3}{c}{ Trefethen\_2000 } & \multicolumn{3}{c}{ fv2 } \\
\hline
ME	& 32102 & 164.8055842 & $<$e-16 & 24 & 2.1007780 & $<$e-16 \\
RelaxME	& 1175 & 4.0813674 & $<$e-16 & 20 & 1.1607601 & $<$e-16 \\
CG	& 545 & 0.9439436 & $<$e-16 & 29 & 0.8901184 & $<$e-16 \\
BB1	& 1598 & 4.3126404 & $<$e-16 & 35 & 2.0813600 & $<$e-16 \\
ABBmin1	& 1165 & 2.0048808 & $<$e-16 & 33 & 0.9790256 & $<$e-16 \\
MomME	& 323 & 1.1291722 & $<$e-16 & 14 & 0.9435173 & $<$e-16 \\
\hline
& \multicolumn{3}{c}{ Kuu } & \multicolumn{3}{c}{ Muu } \\
\hline
ME	& 29513 & 1368.3078077 & $<$e-16 & 41 & 1.9187869 & $<$e-16 \\
RelaxME	& 1144 & 35.2583332 & $<$e-16 & 21 & 0.6481658 & $<$e-16 \\
CG	& 712 & 11.1337830 & $<$e-16 & 32 & 0.5324427 & $<$e-16 \\
BB1	& 1376 & 42.0703310 & $<$e-16 & 39 & 1.2222348 & $<$e-16 \\
ABBmin1	& 1122 & 17.2900812 & $<$e-16 & 37 & 0.6568082 & $<$e-16 \\
MomME	& 413 & 12.7089510 & $<$e-16 & 16 & 0.5087381 & $<$e-16 \\
\hline
\end{tabular}
\end{table}

\subsection{Experiment 4: grayscale images smoothing}
Image smoothing is an important step in image processing, acting as a digital filter that cleans up images, see \cite{Tomasi1998}. A simple optimization model for smoothing a grayscale image $Y\in\mathbb{R}^{n\times m}$ is given by
\begin{equation}
\min_{X = [X_{i,j}] \in \mathbb{R}^{n \times m}} \frac{1}{2} \|X-Y\|_F^2 + \frac{\lambda}{2}\sum_{i=2}^{n} \sum_{j=1}^{m} (X_{i,j} - X_{i-1,j})^2 + \frac{\lambda}{2} \sum_{i=1}^{n} \sum_{j=2}^{m} (X_{i,j} - X_{i,j-1})^2, \label{img_model}
\end{equation}
where $\lambda>0$ is the smoothing parameter, which regularizes the objective function by penalizing large intensity variations between adjacent pixels both vertically and horizontally.  Thus, adjusting $\lambda$, we directly control the intensity of the smoothing effect. Notice that the objective function of problem \eqref{img_model}
is quadratic and strictly convex.

For this experiment, we consider the images \emph{boat.tiff} ($512\times 512$) 
and \emph{male.tiff} ($1024\times 1024$) 
which are available in the USC--SIPI Image Database, Miscellaneous volume\footnote{The images can be downloaded from \url{https://sipi.usc.edu/database/}}. In addition,  we solve the optimization problem \eqref{img_model} with $\lambda = 1$ and $\lambda=100$, considering the following iterative methods: BB1, ABBmin1, RelaxME, ME, CG, and MomME. The initial point for all the methods was taken as $X_0 = Y$, that is, the initial point corresponds to the image that we want to smooth. The tolerance used to stop the methods was $1e$-$7$. Table \ref{tab:5} summarizes the numerical results obtained by the methods for each image. Figure \ref{imageboats} shows the images (global minimizers) obtained by each method for {\em{boat.tiff}}, $\lambda=1$
and for {\em{male.tiff}}, $\lambda=100$.
For {\em{boat.tiff}} and both
$\lambda=1$ and $\lambda=100$, CG is the quickest method. For
image {\em{male.tiff}} of larger
size $1024 \times 1024$, our method
MomME is the quickest, by far when
$\lambda=100$. Therefore on this
experiment too, MomME is competitive
with the other optimizers for large scale problems.
Of course, the larger $\lambda$
the more the smoothed image is 
different from the original image.

\begin{table}
\centering
\caption{Numerical results corresponding to Experiment 4.}
\label{tab:5}
\begin{tabular}{c c c c c c c }
\hline		
Method & Iter & Time & NrmG & Iter & Time & NrmG \\
\hline
& \multicolumn{3}{c}{ boat ($512\times 512$), $\lambda=1$ } & \multicolumn{3}{c}{ boat ($512\times 512$), $\lambda=100$ } \\
\hline		
RelaxME	    &	23	&	2.74	&	$<10^{-6}$	& 217 & 48.83 & $<10^{-6}$\\
MomME	    &	14	&	2.63	&	$<10^{-6}$	& 155 &40.38  &$<10^{-6}$\\
ME	    &	23	&	2.37	&	$<10^{-6}$	& 1615 & 357.69  &$<10^{-6}$\\
CG	    &	29	&	1.25	&	$<10^{-6}$	& 310 & 19.62  &$<10^{-6}$\\
BB1	    &	34	&	4.51	&	$<10^{-6}$	& 404  &34.31  &$<10^{-6}$\\
ABBmin1	&	34	&	1.70	&	$<10^{-6}$	& 399 & 32.98  &$<10^{-6}$\\
\hline
& \multicolumn{3}{c}{ male ($1024\times 1024$), $\lambda=1$ }  
& \multicolumn{3}{c}{ male ($1024\times 1024$), $\lambda=100$ }
\\
\hline
RelaxME	    &	23	&	11.65	&	$<10^{-6}$ & 232	&232.00  &  $<10^{-6}$\\
MomME	    &	14	&	7.75	&	$<10^{-6}$	& 160 &140.62  &$<10^{-6}$\\
ME	    &	24	&	17.88	&	$<10^{-6}$	& 1719 &1711.10  &$<10^{-6}$\\
CG	    &	30	&	16.24	&	$<10^{-6}$	& 322 & 172.61 &$<10^{-6}$\\
BB1	    &	38	&	8.92	&	$<10^{-6}$	& 418 &199.59  &$<10^{-6}$\\
ABBmin1	&	34	&	7.82	&	$<10^{-6}$ &	386 & 177.88 &  $<10^{-6}$\\
\hline
\end{tabular}
\end{table}

\begin{figure}
\centering
\centering
      \begin{tabular}{c} \includegraphics[height=0.22\textheight,width=0.5\textwidth]{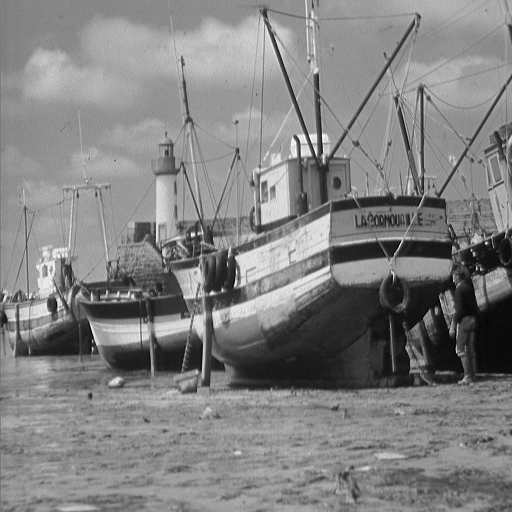}
\end{tabular}
      \begin{tabular}{cc} \includegraphics[height=0.22\textheight,width=0.5\textwidth]{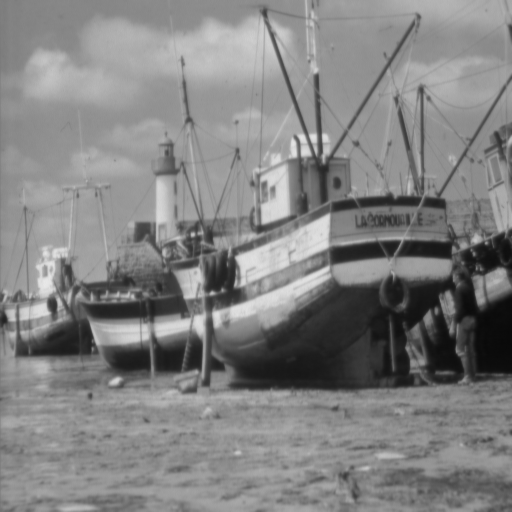}
        &
\includegraphics[height=0.22\textheight,width=0.5\textwidth]{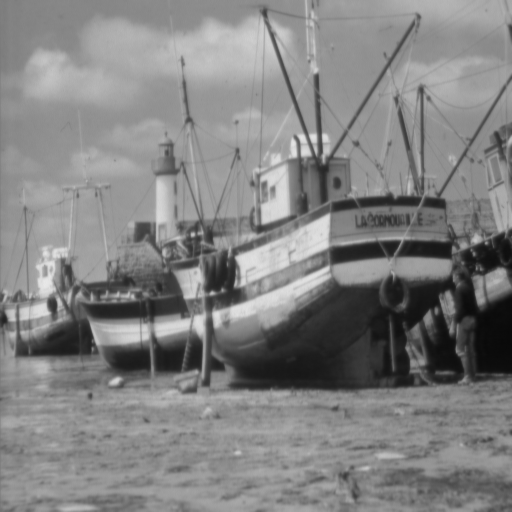}\\
ABBmin1 & BB1 \\
\includegraphics[height=0.22\textheight,width=0.5\textwidth]{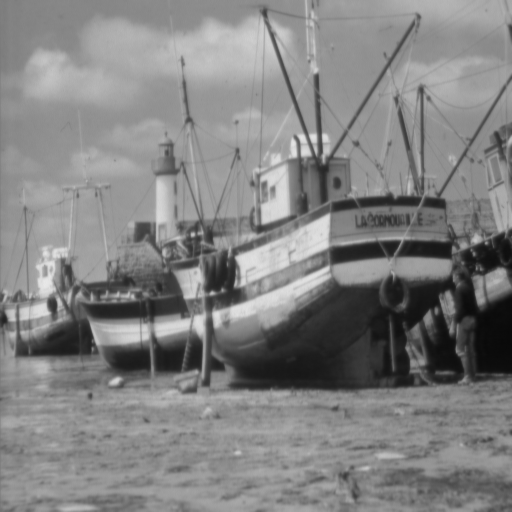}
        &
\includegraphics[height=0.22\textheight,width=0.5\textwidth]{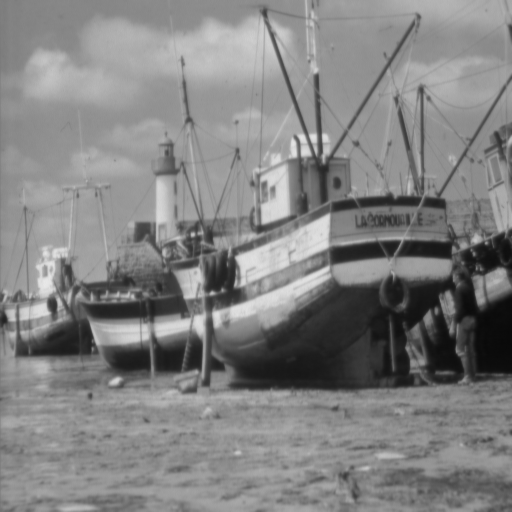}\\
ME & RelaxME\\
\includegraphics[height=0.22\textheight,width=0.5\textwidth]{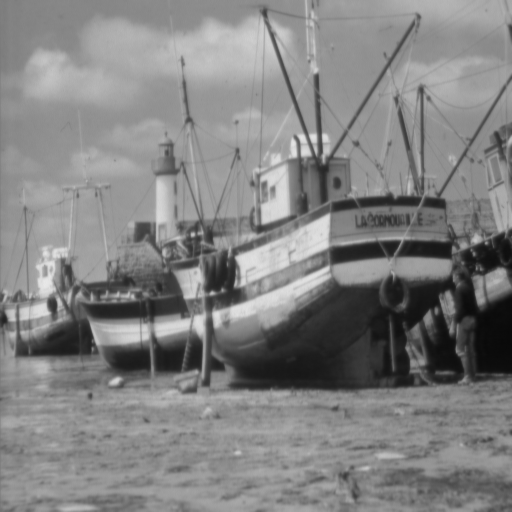}
        &
\includegraphics[height=0.22\textheight,width=0.5\textwidth]{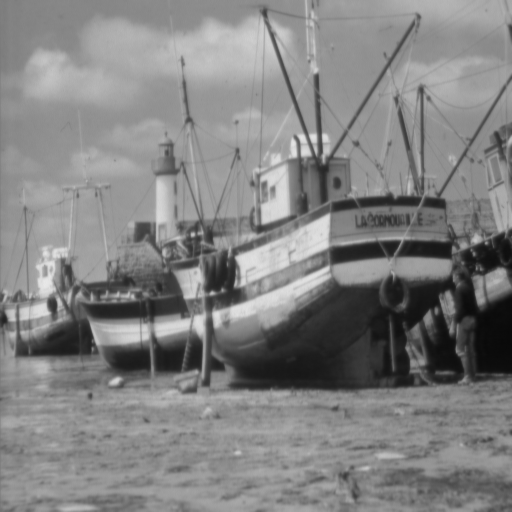}\\
MomME & CG
\end{tabular}
\caption{Original image {\em{boat.tiff}} (upper plot) and smoothed images
with ABBmin1, BB1, ME, RelaxME, MomME, and CG for $\lambda=1$.}\label{imageboats}
\end{figure}

\begin{figure}
\centering
\centering
      \begin{tabular}{c} \includegraphics[height=0.22\textheight,width=0.5\textwidth]{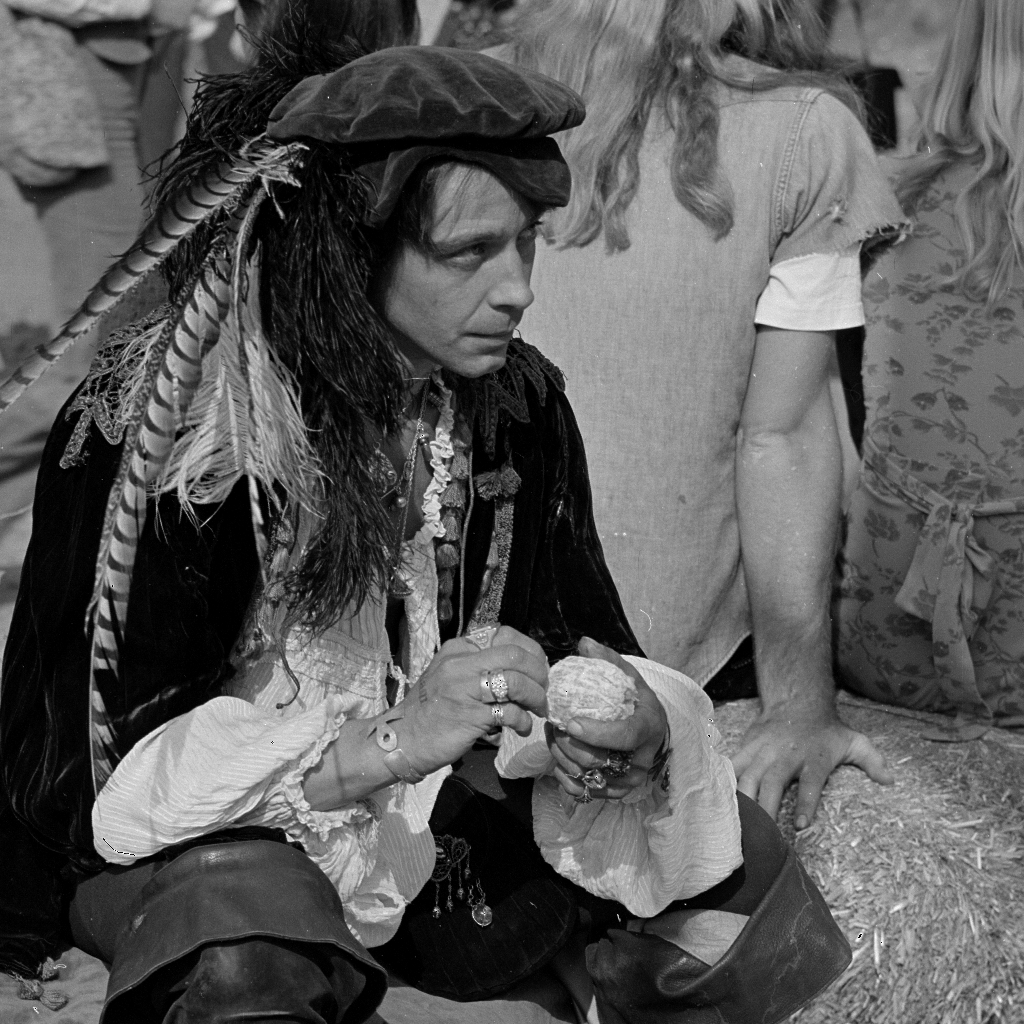}
\end{tabular}
      \begin{tabular}{cc} \includegraphics[height=0.22\textheight,width=0.5\textwidth]{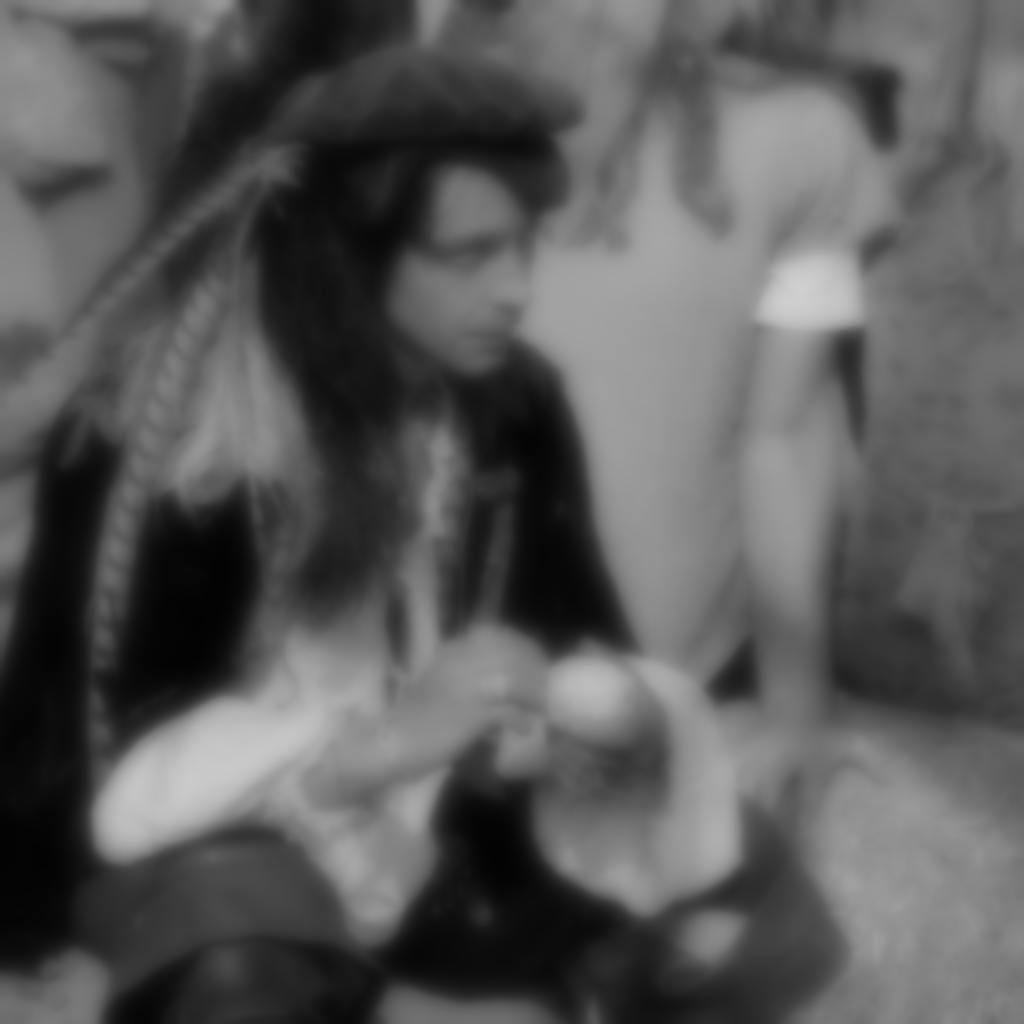}
        &
\includegraphics[height=0.22\textheight,width=0.5\textwidth]{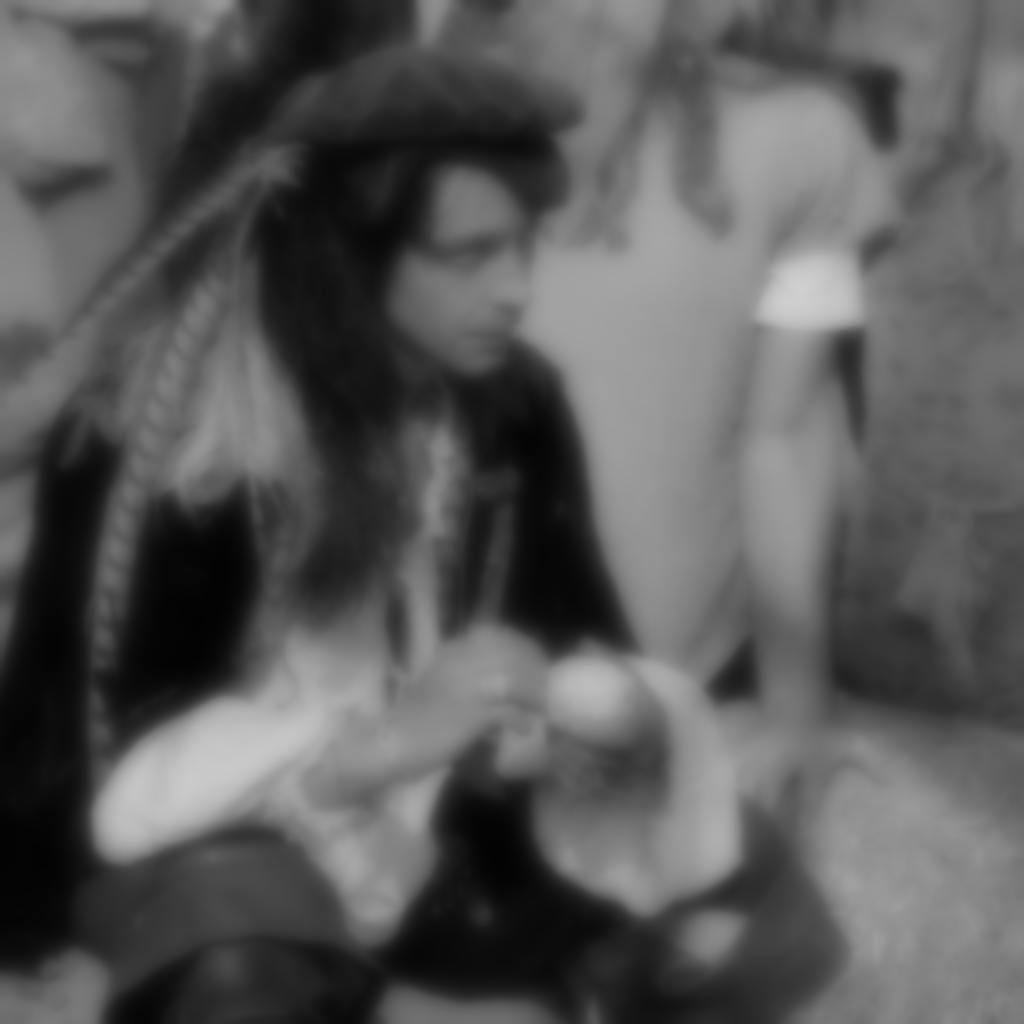}\\
ABBmin1 & BB1 \\
\includegraphics[height=0.22\textheight,width=0.5\textwidth]{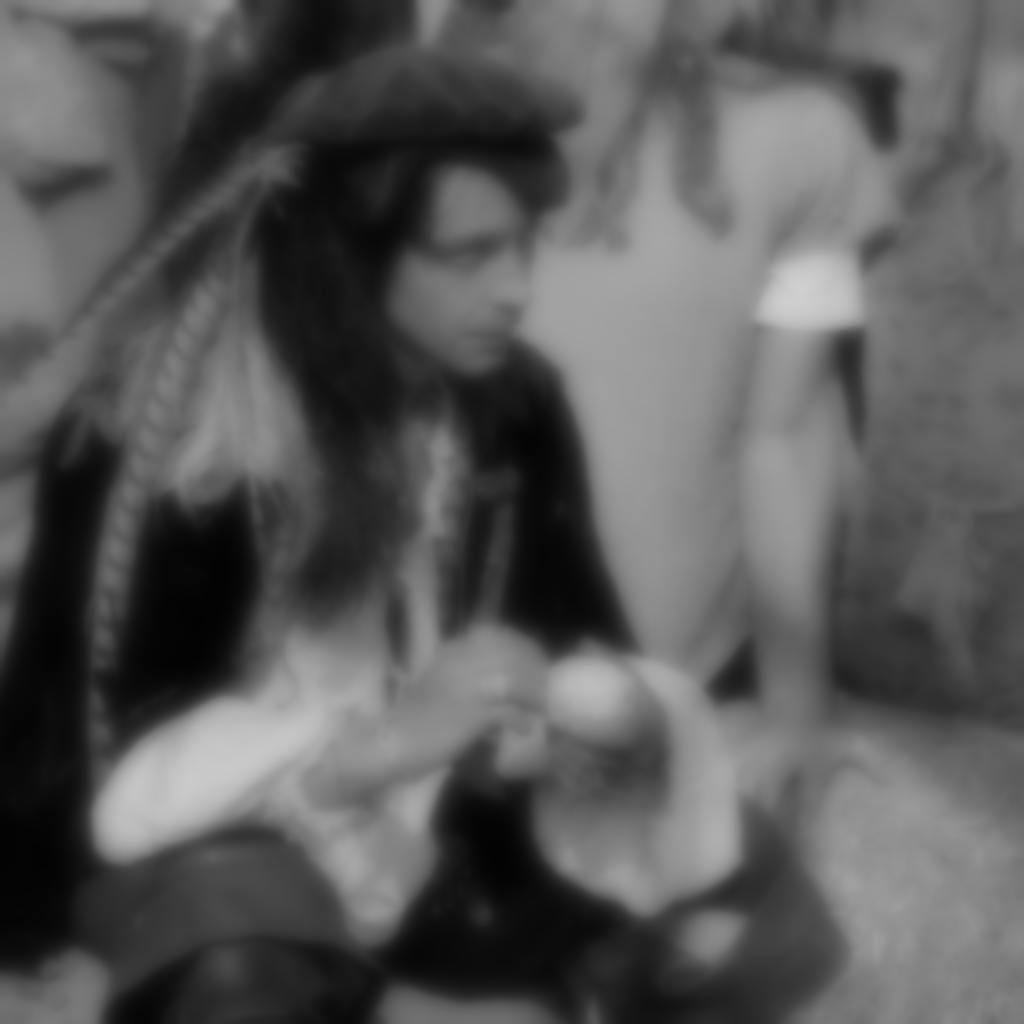}
        &
\includegraphics[height=0.22\textheight,width=0.5\textwidth]{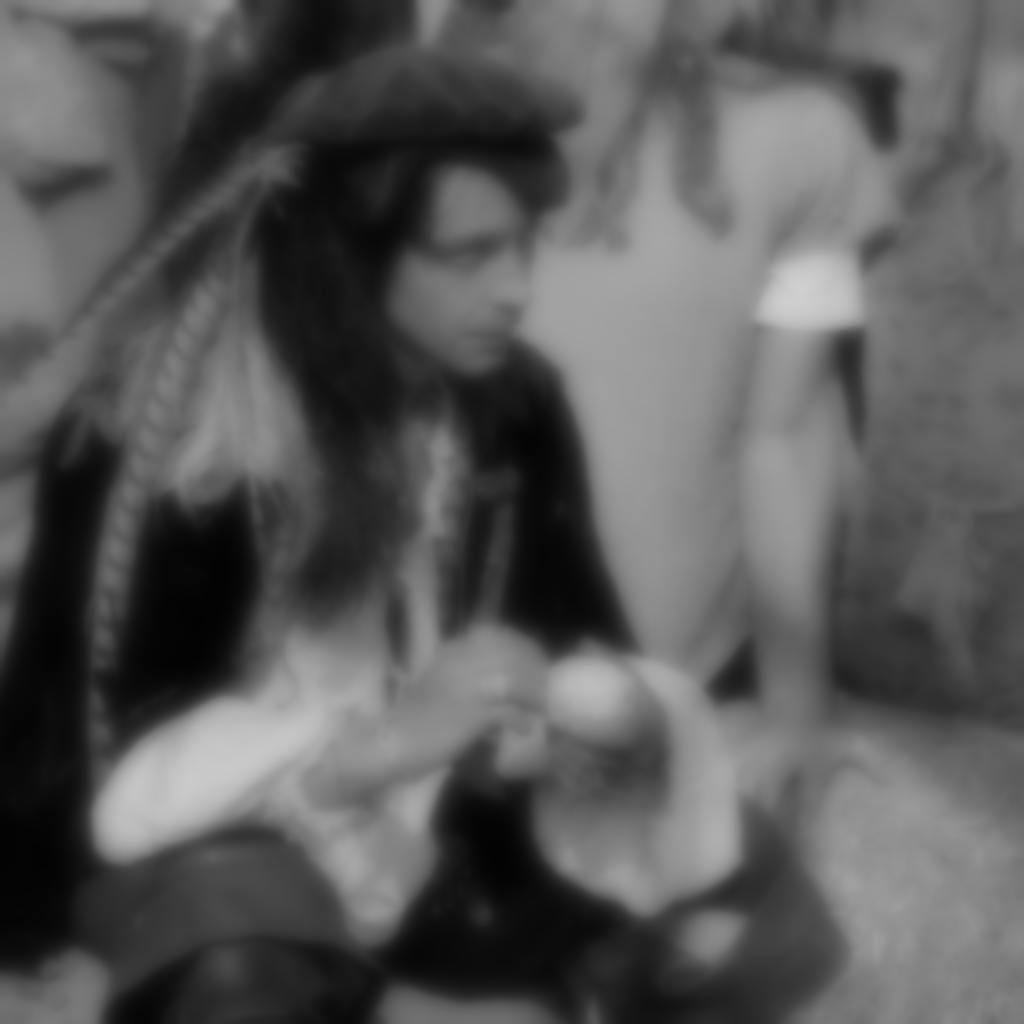}\\
ME & RelaxME\\
\includegraphics[height=0.22\textheight,width=0.5\textwidth]{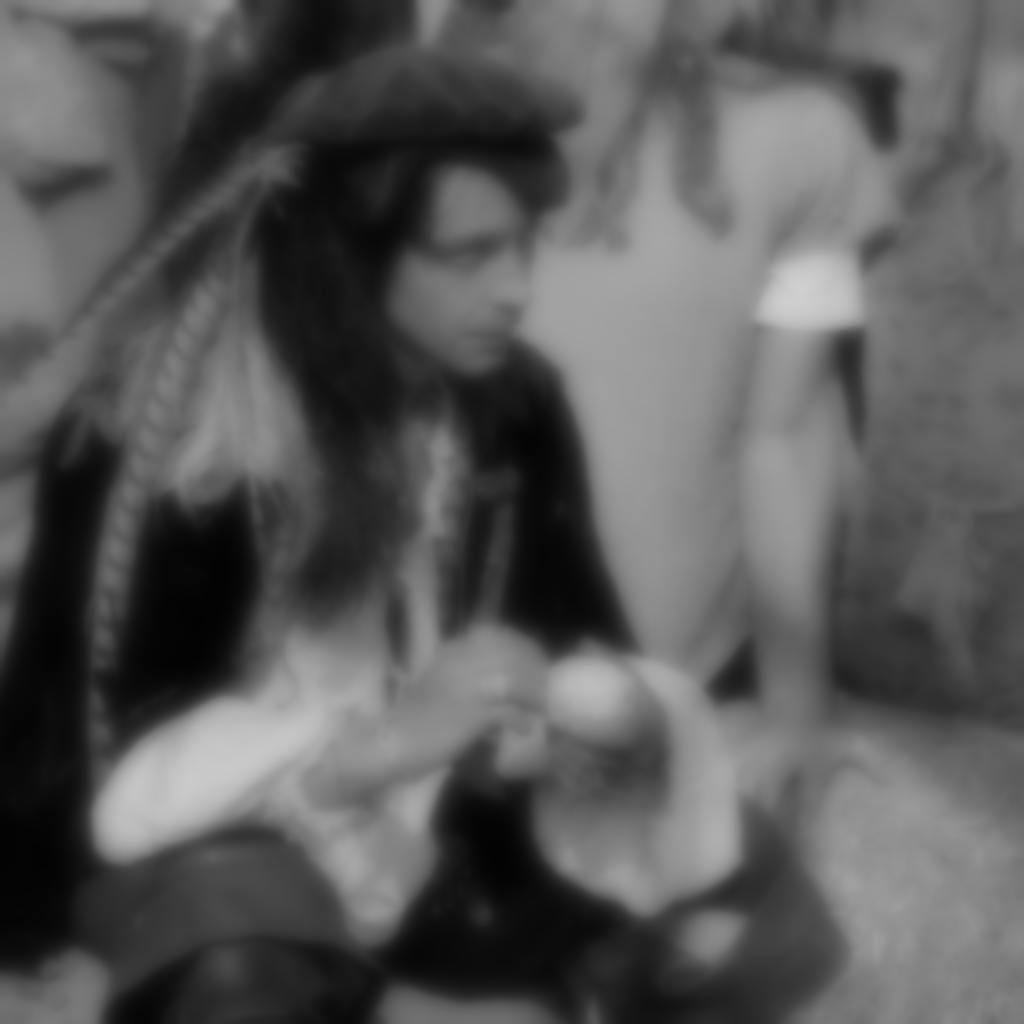}
        &
\includegraphics[height=0.22\textheight,width=0.5\textwidth]{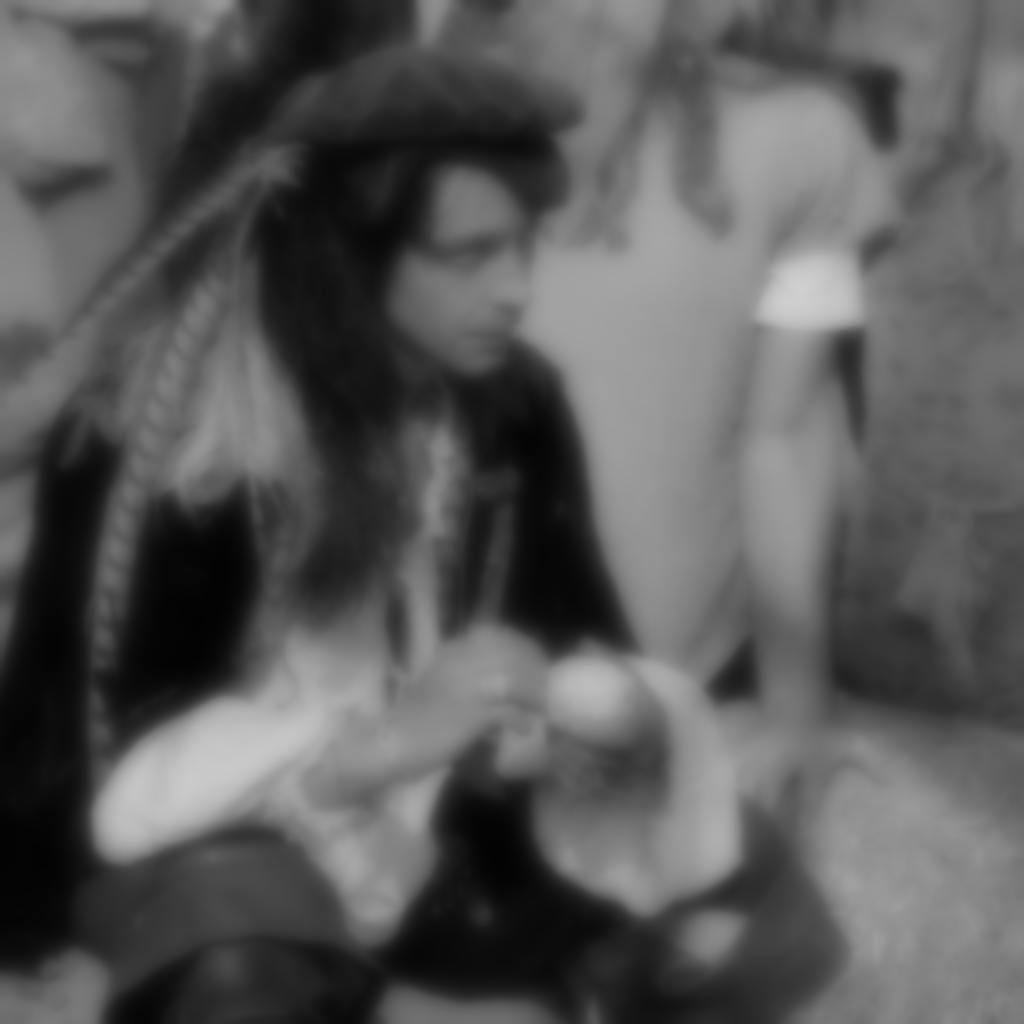}\\
MomME & CG
\end{tabular}
\caption{Original image {\em{male.tiff}} (upper plot) and smoothed images
with ABBmin1, BB1, ME, RelaxME, MomME, and CG for $\lambda=100$.}\label{imagesmale}
\end{figure}

\subsection{Experiment 5: large scale matrices with two different eigenvalues}

To check fininte convergence in one iteration for
ME, RelaxME, and MomME, we build an
$n \times n$
definite positive matrix
$A$ with two different eigenvalues as follows
and take $n \in \{5000, 10000, 20000\}$.
We take $D$ diagonal with 
$D=\mbox{diag}(d_1,d_2,\ldots,d_n)$ where
$d_i=1$ for $i=1,\ldots,n/2$
and $d_i=2$ for $i=1+n/2,\ldots,n$.
We then build a random orthogonal matrix
$P$ and matrix A as 
$A=PDP^{T}$.
To build $P$, we take
$B$ an $n \times n$ matrix whose entries
are realizations of independent
random variables with uniform distribution
on $[0,1]$. Then we compute
the QR decomposition $B=QR$ of $B$
and take $P=Q$.
For these three matrices $A$, generated
as we have just explained and taking
the three values $5000$, $10000$, $20000$ 
of the matrix size,
we run 10 times BB1, MomME, ME, RelaxME, 
ABBmin1, and CG
and report in Table \ref{finiteconvt}  the mean number of
iterations, the mean CPU time, and the mean
norm of the gradient at termination, where the means
are computed over the 10 runs.

\begin{table}
\centering
\caption{Numerical results corresponding to Experiment 5.}
\label{tab:5}
\begin{tabular}{c c c c }
\hline		
Method & Iter & Time & NrmG \\
\hline
\multicolumn{4}{c}{$n=5000$}  \\
\hline		
BB1	    &	9	&	0.11	&	$<10^{-6}$	\\
MomME	&	1	&0.03		&	$<10^{-6}$	\\
ABBmin1	&	9	&	0.09	&	$<10^{-6}$	\\
RelaxME	&	1	& 0.02		&	$<10^{-6}$	\\
ME	&	1	&	0.02	&	$<10^{-6}$	\\
CG	&2	&	0.03	&	$<10^{-6}$	\\
\hline
\multicolumn{4}{c}{ $n=10000$ }  \\
\hline		
BB1	    &	9	& 0.41		&	$<10^{-6}$	\\
MomME	&	1	&	0.08	&	$<10^{-6}$	\\
ABBmin1	&	9	&	0.37	&	$<10^{-6}$	\\
RelaxME	&	1	&	0.08	&	$<10^{-6}$	\\
ME	&	1	&	0.13	&	$<10^{-6}$	\\
CG	&2	&	0.13	&	$<10^{-6}$	\\
\hline
\multicolumn{4}{c}{ $n=20000$ }  \\
\hline		
BB1	    &	9	&	2.34	&	$<10^{-6}$	\\
MomME	&	1	&	0.50	&	$<10^{-6}$	\\
ABBmin1	&	9	&	1.84	&	$<10^{-6}$	\\
RelaxME	&	1	&	0.46	&	$<10^{-6}$	\\
ME	&	1	&	0.93	&	$<10^{-6}$	\\
CG	&2	&	0.73	&	$<10^{-6}$	\\
\hline
\end{tabular}
\caption{Convergence in one iteration of ME, RelaxME, and
MomME for matrices of size $n$ having 2 distinct eigenvalues.
Mean number of iterations, mean CPU time, and mean norm of the gradient at termination for BB1, MomME, ME, RelaxME, 
ABBmin1, and CG.}\label{finiteconvt}
\end{table}

In Table \ref{finiteconvt}, in accordance with Theorem 
\ref{finiteconv}, we observe the convergence
in one iteration of ME, RelaxME, and MomME
for three instances where matrix $A$
has two distinct eigenvalues.
We also see that the mean CPU time
with at least one of these methods is the smallest
among all optimizers.
In particular, ME, Relax ME, and MomME are competitive
with CG which requires 2 iterations on these instances.

\section*{Appendix}

\subsection*{New proof of \eqref{ineqfriststep} when $g_k$ and $r_k$ are linearly independent}

In this section, we provide a
new proof of \eqref{ineqfriststep}
when $g_k$ and $r_k$ are linearly independent.
This proof uses 
Kantorovich inequality.
Let us define the auxiliary sequence of real numbers $(b_k)$ by $b_k := \frac{\|\nabla f(z_k)\|^2}{\nabla f(z_k)^{T}A\nabla f(z_k)}$, where
$z_k = x_k - \frac{t_k}{2}g_k$, for all $k\geq 0$. Using the Taylor expansion of $f$, we have
\begin{eqnarray}
f(z_k - b_k\nabla f(z_k)) & = & 
f(z_k)-b_k \|\nabla f(z_k)\|^2 + \frac{b_k^2}{2}\nabla f(z_k)^T A \nabla f(z_k)^T \nonumber\\ 
& = & 
f(z_k)-\frac{b_k}{2} \|\nabla f(z_k)\|^2. \label{firstrelapp}
\end{eqnarray}

Using this relation and the optimality of the step--sizes $(\alpha_k,\beta_k)$ we have that
\begin{eqnarray}
f(\tilde x_{k+1}) & \leq &  f(x_k + \alpha_kg_k + \beta_k r_k) \nonumber \\
& \leq &  f\left(x_k - \frac{1}{2}(t_k + b_k)g_k - \frac{1}{2}b_k r_k\right) \nonumber \\
& = &  f\left(z_k - \frac{b_k}{2}(g_k + r_k)\right) \nonumber \\
& = &  f\left(z_k - \frac{b_k}{2}(\nabla f(x_k) + \nabla f(y_k))\right) \nonumber \\
& = &  f\left(z_k - \frac{b_k}{2}(A x_k + A(x_k-t_k g_k)-2b)\right) \nonumber \\
& = &  f(z_k - b_k\nabla f(z_k))  \nonumber \\
& \stackrel{\eqref{firstrelapp}}{=} &  f(z_k) - \frac{b_k}{2}\|\nabla f(z_k)\|^2. \label{eq1_theorem1}
\end{eqnarray}
Subtracting $f(x_{*})$ from both sides of \eqref{eq1_theorem1}, we get
\begin{equation}
f(\tilde x_{k+1}) - f(x_{*}) \leq  f(z_k)-f(x_{*}) - \frac{b_k}{2}\|\nabla f(z_k)\|^2. \label{eq2_theorem1}
\end{equation}

Now, using Kantorovich inequality with $v = \nabla f(z_k) \neq 0$ and noting that $\nabla f(z_k) = Az_k - b = A(z_k - x_{*})$, we obtain
\begin{equation}
\frac{1}{2}b_k \|\nabla f(z_k)\|^2 \geq {\eta}(f(z_k) - f(x_{*})), \label{eq3_theorem1}
\end{equation}
where ${\eta} := \frac{4\lambda_{1}\lambda_{n}}{(\lambda_{1} + \lambda_{n})^2}>0$.
Thus, in view of \eqref{eq2_theorem1} and \eqref{eq3_theorem1}, we arrive at
\begin{eqnarray}
&&f(\tilde x_{k+1}) - f(x_{*})  \leq  \left( 1 - {\eta} \right)(f(z_k) - f(x_{*})) \nonumber \\
&  &\leq\left( \frac{\lambda_{n} - \lambda_{1}}{\lambda_{n} + \lambda_{1}} \right)^2(f(z_k) - f(x_{*})). \label{eq4_theorem1}
\end{eqnarray}
On the other hand, from the definition of $z_k$, we have
\begin{equation}
f(z_k) - f(x_{*}) = (f(x_k) - f(x_{*})) - \frac{t_k}{4}\|g_k\|^2. \label{eq5_theorem1}
\end{equation}
Again, using Kantorovich's inequality in the last term of \eqref{eq5_theorem1} we have
$$ \frac{t_k}{4}\|g_k\|^2 \geq \bar{\eta}(f(x_k) - f(x_{*})). $$
Substituting this last result into \eqref{eq5_theorem1}, we obtain
\begin{equation}
f(z_k) - f(x_{*}) \leq (1-\bar{\eta}) (f(x_k) - f(x_{*})) \leq \left( \frac{\lambda_{n} - \lambda_{1}}{\lambda_{n} + \lambda_{1}} \right)^2 (f(x_k) - f(x_{*})). \label{eq6_theorem1}
\end{equation}
Finally, combining \eqref{eq4_theorem1} with \eqref{eq6_theorem1}, we arrive at
\begin{eqnarray}
f(\tilde x_{k+1}) - f(x_{*}) &\leq& \left( \frac{\lambda_{n} - \lambda_{1}}{\lambda_{n} + \lambda_{1}} \right)^4 (f(x_k) - f(x_{*})), 
\end{eqnarray}
which achieves the proof of \eqref{ineqfriststep}.
$\hfill \square$

\end{document}